\tikzset{>=pxto}
\newcommand{\mop}[1]{\mathop{#1}}
\newcommand{\mc}[1]{\mathcal{#1}}
\newcommand{\mbf}[1]{\mathbf{#1}}
\newcommand{\mr}[1]{\mathrm{#1}}
\newcommand{\llb}[1]{\llbracket #1 \rrbracket}
\newcommand{\lift}[2]{\mathop{\langle\uparrow_{#1}^{#2}\rangle}}
\newcommand{\prtt}{\mathrm{T}_{\mathrm{pr}}}
\DeclareMathOperator{\lambdaop}{\lambda}
\DeclareMathOperator{\piop}{\pi}
\DeclareMathOperator{\Gammaop}{\Gamma}
\DeclareMathOperator{\inl}{inl}
\DeclareMathOperator{\inr}{inr}
\DeclareMathOperator{\PSh}{Pr}
\DeclareMathOperator{\Sh}{Sh}
\DeclareMathOperator{\Set}{Set}
\DeclareMathOperator{\yo}{y}
\newcommand{\setN}{\vmathbb{N}}
\newcommand{\synN}{\mathrm{N}}
\DeclareMathOperator{\synz}{zero}
\DeclareMathOperator{\syns}{succ}
\DeclareMathOperator{\synind}{ind}
\newcommand{\pred}{\mathop{\yo_{\mathrm{pred}}}}
\newcommand{\emptyobject}{\vmathbb{0}}
\newcommand{\unit}{\vmathbb{1}}
\newcommand{\synunit}{\mathrm{1}}
\newcommand{\bool}{\vmathbb{2}}
\newcommand{\tm}[1]{\mathop{\mr{tm}_{#1}}}
\newcommand{\tp}[1]{\mathop{\mr{tp}_{#1}}}
\newcommand{\synU}{\mathrm{U}}
\newcommand{\synUfib}{\overline{\mathrm{U}}}
\newcommand{\prubar}{\hat{\mathcal{U}}_0^{\mr{pr}}}
\newcommand{\justV}{\mathcal{U}}
\newcommand{\prv}{\mathcal{U}_0^{\mr{pr}}}
\newcommand{\cprv}{\mathcal{U}_0^{\mr{cpr}}}
\newcommand{\prvbar}{\hat{\mathcal{U}}_0^{\mr{pr}}}
\newcommand{\catR}{\mathrm{R}}
\newcommand{\zeroR}{\mathbf{1}}
\newcommand{\oneR}{\mathbf{N}}
\newcommand{\jfin}{\mathrm{J}_{\mathrm{fin}}}
\newcommand{\bfin}{\mathrm{B}_{\mathrm{fin}}}
\newcommand{\ShR}{\mathop{\mathcal{R}}}
\newcommand{\glue}{\mathop{\mathcal{G}}}
\newcommand{\modal}{\mathop{\fullmoon}}
\newcommand{\notmodal}{\mathop{\newmoon}}
\newcommand{\rotatesim}{\begin{rotate}{90}$\sim$\end{rotate}}
\newcommand{\type}{\:\mathrm{type}}
\DeclareMathOperator{\sgn}{sgn} 
\theoremstyle{theorem}
\declaretheorem[sibling=subsection,style=theorem]{theorem}
\declaretheorem[sibling=theorem,style=theorem]{lemma}
\theoremstyle{remark}
\declaretheorem[sibling=theorem,style=remark]{remark}
\declaretheorem[sibling=theorem,style=remark]{definition}
\newlength{\fulltextwidth}
\begin{document}

\title{Primitive Recursive Dependent Type Theory}

\author[Buchholtz]{Ulrik Buchholtz}
\author[Schipp von Branitz]{Johannes Schipp von Branitz}
\address{University of Nottingham, UK}
\email{{\href{mailto:ulrik.buchholtz@nottingham.ac.uk}{\texttt{ulrik.buchholtz@nottingham.ac.uk}} \\
    {\href{mailto:johannes.schippvonbranitz@nottingham.ac.uk}{\texttt{johannes.schippvonbranitz@nottingham.ac.uk}}}}}
\urladdr{\url{https://ulrikbuchholtz.dk/} \\ \url{https://jsvb.xyz/}}

\begin{abstract}  
  We show that restricting the elimination principle of the natural numbers type in Martin-L{\"o}f Type Theory (MLTT) to a universe of types not containing $\Pi$-types ensures that all definable functions are primitive recursive. This extends the concept of primitive recursiveness to general types. We discuss extensions to univalent type theories and other notions of computability.
  We are inspired by earlier work by Martin Hofmann~\cite{hofmann_application_1997}, work on Joyal's arithmetic universes~\cite{maietti_2010}, and
  Hugo Herbelin and Ludovic Patey's sketched Calculus of Primitive Recursive Constructions~\cite{herbelin-patey-cprc}.
  
  We define a theory $\prtt$ that is a subtheory of MLTT with two universes $\mr{U}_0:\mr{U}_1$, such that all inductive types are finitary and $\mr{U}_0$ is restricted to not
  contain $\Pi$-types:
  \[
  \inferrule{\vdash A:\mr{U}_\alpha \\ a:A\vdash B(a):\mr{U}_\alpha}{\vdash (a:A)\to B(a) : \mr{U}_{\mr{max}(1,\alpha)}}
  \]
  We prove soundness such that all functions $\setN\to\setN$ are primitive recursive. The proof requires that $\prtt$ satisfies canonicity, which we easily prove using synthetic Tait computability~\cite{sterling_first_2021}.
\end{abstract}

\maketitle

\section{Introduction}\label{sec:primrec}
A primitive recursive function is roughly a numerical algorithm that can be computed using only (bounded) $\textrm{for}$-loops. They form a subclass of all computable functions and are commonly used in proofs of relative consistency results.

Martin-L\"{o}f Type Theory (MLTT) is a dependent type theory which can serve as a foundation for mathematics. The variant we consider features $\Sigma$- or dependent pair and $\Pi$- or dependent function types, intensional identity types, basic inductive types and a hierarchy of universes.

The main contribution of this paper is a proof that restricting the elimination principle of the type of natural numbers to a universe not containing $\Pi$-types ensures that all definable terms $n:\synN\vdash f(n):\synN$ are primitive recursive functions under their standard interpretation in the topos $\Set$ of sets. In other words, dependent type theory without $\Pi$-types is a conservative extension of primitive recursive arithmetic (PRA).
The proof proceeds by gluing the set-model to a certain sheaf topos of primitive recursive functions.

PRA is often invoked as a base theory for reverse mathematics~\cite{SimpsonSSOA} and
work in formal metatheory~\cite{KleeneIM}. However, this line of work requires a lot of delicate encoding and would be difficult to mechanize in a proof assistant.
Our work aims to alleviate this problem by giving a subsystem of MLTT (which is itself
the basis for many proof assistants, including Agda, Rocq (née Coq), and Lean),
which is conservative over PRA, but is much more expressive (requiring fewer encodings),
and which is directly amenable for mechanization.
Eventually, we imagine that a tool like Agda could feature a \verb!--pra! flag
to ensure that a file only uses constructions that are conservative over PRA.
Since we provide a modular semantics that ensures this, it is easily possible to extend our syntax with even further conveniences, some of which we discuss in~\Cref{sec:conclusion}.

In \Cref{sec:pr} we recall the basic definitions of primitive recursive functions and their representation in Cartesian closed categories. In \Cref{sec:stc} we fix notation for the tool that is synthetic Tait computability (STC).

This is followed by a sketch of concrete syntax and a formal definition of higher order abstract syntax for our Primitive Recursive Dependent Type Theory (PRTT) in \Cref{sec:syntax}. We discuss examples and applications in \Cref{sec:examples}.

In \Cref{sec:semanticsr} we define a semantics in the topos of sheaves on a category of arities and primitive recursive functions equipped with the finite cover topology. This is followed by a proof that PRTT admits canonical forms in \Cref{sec:canonicity}. The results of those two sections are finally combined in \Cref{sec:soundness} where we define an interpretation of PRTT in a topos, constructed using Artin gluing along the interpretations in the sheaf topos and the standard model. We use the glue topos to show that all PRTT-definable functions are in fact primitive recursive.

A comparison of our results to related work is given in \Cref{sec:related}.

The system PRTT has the downside that while it is complete with respect to primitive recursive functions, not every primitive recursive function can be encoded in a straightforward way. We discuss possible extensions using a comonadic modality, an internal universe of codes for primitive recursive constructions, finitary inductive types and polynomial time computability akin to Hofmann's calculi~\cite{hofmann_application_1997,hofmann_mixed_1998} in \Cref{sec:conclusion}.
We also consider applications to Cubical Type Theory and obstructions to the transfer of our techniques to higher topoi.

\section{Primitive Recursion}\label{sec:pr}
In this section we define primitive recursive functions and explain why one might expect MLTT with natural numbers but without $\Pi$-types to capture exactly primitive recursive functions.
\begin{definition}
	The \emph{basic primitive recursive functions} are constant functions, the successor function and projections out of finite products of $\setN$. A \emph{primitive recursive function} is obtained by finite applications of function composition and the primitive recursion operator
	\begin{align*}
		(l : \setN)\to(\setN^l\to\setN)\to(\setN^{l+2}\to\setN)&\to(\setN^{l+1}\to\setN)\\
		\mr{primrec}^l_{g,h}(0,x)&=g(x)\\
		\mr{primrec}^l_{g,h}(n+1,x)&=h(n,\mr{primrec}^l_{g,h}(n,x),x)
	\end{align*}
\end{definition}
Many functions are primitive recursive. Some examples include addition, exponentiation and the greatest common divisor. One of the simplest and earliest-discovered examples of a total computable function which is not primitive recursive is the Ackermann function~\cite{ackermann_zum_1928}. A simple two-argument variant $\mr{A}$ due to Rózsa Péter is given by
\begin{align*}
	\mr A(0,n)&:=n+1\\
	\mr A(m+1,0)&:=\mr A(m,1)\\
	\mr A(m+1, n+1)&:=\mr A(m,\mr A(m+1, n))
\end{align*}
for non-negative integers $m$ and $n$. One can show that $A$ grows faster than any primitive recursive function and is therefore not primitive recursive. The explosion in growth is caused by the third clause, featuring structural recursion targeting the function type $\setN \to \setN$.

We can transfer the notion of primitive recursiveness to morphisms between natural numbers objects (NNO) in general Cartesian closed categories (CCC) (c.f. \cite{Lambek1986IntroductionTH}).
\begin{definition}
	A function $f:\setN^k\to\setN$ is \emph{representable} in a Cartesian category with weak NNO $\synN$ if there is an arrow $\Tilde{f}:\synN^k\to\synN$ such that
	the following square commutes.
	\[
	\begin{tikzcd}
		\setN^k
		\arrow[r, "f"]
		\arrow[d, swap, "\mr{num}^k"]
		& \setN
		\arrow[d, "\mr{num}"]\\
		\Gammaop \synN^k
		\arrow[r, swap, "\Gammaop \Tilde{f}"]
		& \Gammaop \synN
	\end{tikzcd}
	\]
	Here $\mr{num}(n)$ denotes the $n$-th numeral in $\synN$ and $\Gamma:=\mr{Hom}(\unit,-)$ the global sections functor.
\end{definition}

The following result gives us a hint what a dependent type theory which is sound and complete w.r.t. primitive recursion might look like.
\begin{theorem}[\cite{hofstra_aspects_2020,ROMAN1989267,Lambek1986IntroductionTH}]
	The primitive recursive functions are exactly the representable functions in the free Cartesian category with parametrized NNO. The provably total functions of Peano Arithmetic are exactly the representable functions in the free CCC with weak NNO.
\end{theorem}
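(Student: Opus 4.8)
The statement bundles two independent characterizations, and I would prove each by the same overall strategy: exhibit a concrete structured category whose representable maps are manifestly the intended class, then use the universal (freeness) property to transport morphisms between the free category and this concrete model. I begin with the first claim, that the primitive recursive functions coincide with the functions representable in the free Cartesian category $\mb{F}$ with parametrized NNO. This splits into two inclusions.

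For the inclusion that every primitive recursive function is representable, I proceed by induction on the derivation of the function as a primitive recursive term. The base functions are immediate: the successor is represented by the structure map $s : \synN \to \synN$; numerals, and hence constant functions, are obtained by composing iterates of $s$ with $0 : \unit \to \synN$ and the unique map into $\unit$; and projections come from the Cartesian structure. For the closure conditions, composition is handled by composing the representing arrows together with functoriality of global sections, while the primitive recursion operator is exactly the content of the universal property of the \emph{parametrized} NNO. The parametrization is the crucial point: in a merely Cartesian (non-closed) category one cannot internalize the parameter $x$ via an exponential, so one genuinely needs the recursor producing $f : A \times \synN \to X$ from $g : A \to X$ and $h : A \times X \to X$. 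The variant in the paper, whose step function $h(n,-,x)$ also sees the index $n$, is recovered by recursing with value object $\synN \times X$ so as to carry $n$ alongside the running value, then projecting, a routine encoding.

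For the converse inclusion I build a concrete model $\mc{C}_{\mr{PR}}$: objects are natural numbers $k$ (standing for $\synN^k$), and morphisms $k \to m$ are $m$-tuples of primitive recursive functions $\setN^k \to \setN$, with composition given by substitution. Finite products are addition of arities, the terminal object is $0$, and $\synN := 1$ carries the evident zero and successor. Closure of the primitive recursive functions under primitive recursion is precisely what makes the parametrized-NNO recursor exist (uniqueness being automatic, since the two defining equations determine the function on all numerals), so $\mc{C}_{\mr{PR}}$ is a Cartesian category with parametrized NNO. By freeness there is a unique structure-preserving functor $F : \mb{F} \to \mc{C}_{\mr{PR}}$, and it sends the generic NNO, zero and successor to their counterparts, hence sends each numeral to the corresponding actual number under global sections $\Gammaop = \Hom{}{\unit}{-}$. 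Given a representable $f$ with witness $\Tilde{f} : \synN^k \to \synN$, the image $F(\Tilde{f})$ is by construction a primitive recursive function, and because $F$ preserves numerals and global sections the representability square forces $F(\Tilde{f})$ to agree with $f$ on every tuple of numerals. Thus $f$ is primitive recursive, completing the first claim.

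The second claim I would reduce to a classical proof-theoretic fact. By the Lambek--Scott correspondence between Cartesian closed categories and simply typed $\lambda$-calculi, the free CCC with weak NNO is equivalent to G\"odel's System T: the simply typed $\lambda$-calculus over a base type $\synN$ with zero, successor and a recursor, taken modulo $\beta\eta$ together with the recursor equations. The \emph{weakness} of the NNO corresponds exactly to the fact that this equational theory imposes the computation rules of the recursor but no induction or uniqueness principle. Under this identification the functions representable in the free category are exactly those definable by closed System T terms of type $\synN \to \synN$, and the characterization of the latter as precisely the provably total functions of Peano Arithmetic (equivalently, the functions definable by recursion on ordinals below $\varepsilon_0$) is the classical theorem of G\"odel, Tait and Girard supplied by the cited references. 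I expect the genuinely hard part to lie entirely here: the first claim is a direct, if fiddly, exercise whose only subtlety is getting the parametrized recursion and the index-passing encoding right, whereas the second rests on the normalization of System T and the ordinal analysis of PA up to $\varepsilon_0$, which is where all the real mathematical content resides.
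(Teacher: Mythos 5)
The paper does not actually prove this theorem: it is stated as a known result, with the proof entirely deferred to the cited references (Hofstra, Rom\'an, Lambek--Scott), so there is no internal proof to compare against. Your reconstruction is correct and is essentially the standard argument found in those sources: for the first claim, the syntactic induction for one inclusion and the freeness/term-model argument for the other, where the key points --- that your concrete category $\mc{C}_{\mr{PR}}$ of arities and tuples of primitive recursive maps has a \emph{genuine} (not merely weak) parametrized NNO because its morphisms are extensional functions, so uniqueness of the recursor holds by metatheoretic induction, and that the paper's recursor variant with the step function seeing the index $n$ is recovered by recursing into $\synN\times X$ --- are exactly the details that need handling, and you handle both. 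For the second claim, the identification of the free CCC with weak NNO with the term model of G\"odel's System T (Lambek--Scott), followed by the classical Dialectica/Tait/Girard characterization of the System T-definable functions as the provably total functions of PA, is likewise the intended route; you are right that this is where the real mathematical content lies, and deferring it to the literature is no weaker than what the paper itself does.
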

On one hand, induction on natural numbers and products suffice to represent all primitive recursive functions. On the other hand, if the category has exponentials, more than just the primitive recursive functions are representable. In other words, when type theory is used as the internal language of such a category, one would expect the presence or absence of $\Pi$-types to determine whether just primitive recursive or all total recursive functions are definable.

\section{Synthetic Tait Computability}\label{sec:stc}
In his PhD Thesis \cite{sterling_first_2021}, Sterling introduces a synthetic method of constructing logical relations in order to prove properties such as normalisation for formal systems. The idea is that, given a left exact functor $\rho:\mr T\to\mathcal S$ from a theory (\emph{qua} syntactic category) to a topos, the Artin gluing $\glue:=\hat\rho\downarrow\mathcal S$ of the extension of $\rho$ along the free cocompletion of $\mr T$ is a topos. This topos comes equipped with an open immersion $j:\PSh\mr T\hookrightarrow\glue$ and a closed immersion $i:\mathcal S\hookrightarrow\glue$ such that $\hat\rho=i^*\circ j_*$. The two immersions act on sheaves as
\begin{align*}
	j_*(X)&=(X,\mr{id}:\hat\rho(X)\to\hat\rho(X))\\
	j^*(X,\varphi:S\to\hat\rho(X))&=X\\
	i_*(S)&=(\unit, !:S\to\hat\rho(\unit))\\
	i^*(X,\varphi:S\to\hat\rho(X))&=S.
\end{align*}
One obtains an open modality $\modal:=j_*\circ j^*$ and a closed modality $\notmodal:=i_*\circ i^*$ on $\glue$. Given the subterminal sheaf
\[\xi:=(\unit, !:\emptyobject\to\hat\rho(\unit)),\]
one sees that $\notmodal (X,\varphi)$ is given by the pushout of the span \[(X,\varphi)\leftarrow \bigl((X,\varphi)\times\xi\bigr)\to\xi.\]
This yields an elimination principle for $\notmodal$-modal types, denoted by a $\mr{try}$-clause.

One uses the internal language of $\mathcal G$ to construct a section $s$ of the projection $j^*$. This section induces squares
\[
\begin{tikzcd}
	S
	\arrow[r, ""]
	\arrow[d, ""]
	& S'
	\arrow[d, ""]\\
	\hat\rho(X)
	\arrow[r, swap, "\hat\rho(f)"]
	& \hat\rho(X')
\end{tikzcd}
\]
for terms $f:X\to X'$ of $\mr T$. The section is constructed by lifting syntactic objects, internally given by $\modal$-modal types, to larger universes, therby attaching computational information to them.

In order to ensure that $s$ is a section up to strict equality rather than isomorphism, one assumes that the relevant universes are \emph{strong}, meaning that they satisfy \emph{realignment} along monomorphisms. In our exposition we leave realignment mostly implicit. It should be remarked that the existence of strong universes is not necessarily constructive in general, and one way to approach a soundness proof without realignment would be to try to use the techniques presented in \cite{bocquet_et_al:LIPIcs.FSCD.2023.18}.

We adopt Sterling's notation
\[\{A\mid\xi\hookrightarrow a\}\]
for \emph{extent types}, that is types whose elements are strictly aligned over some $\xi$-partial element $a$.

If $\rho$ is the global sections functor, then one can choose a section which aligns canonical forms over closed terms and hence proves canonicity for $\mr T$. We shall employ this technique to prove canonicity for PRTT in \Cref{sec:canonicity}.

Sterling's logical framework is a dependent type theory which produces an LCCC $\mr T$ of judgements given its specification as a total space over the judgement classifier. Such a specification is typically given using Agda-style $\textrm{record}$ syntax. A complete syntax of the abstract syntax for our theory is presented in \Cref{fig:PRTT}.

Furthermore, we construct a model $\llb{-}_{\ShR}:\prtt\to\ShR$ in a certain topos of sheaves on a site of primitive recursive functions and form the glue topos along the functor
\begin{align*}
	\rho:\PSh\prtt&\to\Set\\
	X&\mapsto\Gamma(\widehat{\llb{X}_{\ShR}})\times\widehat{\llb{X}_{\Set}}
\end{align*}
(with $\widehat{\llb{-}_{\ShR}}$ and $\widehat{\llb{-}_{\Set}}$ denoting the corresponding Yoneda extensions),
in order to prove that the standard interpretation of a term $n:\synN\vdash f(n):\synN$ in $\Set$ is indeed primitive recursive.

\section{Syntax of Primitive Recursive Dependent Type Theory}\label{sec:syntax}
We would like to restrict the induction principle of an inductive natural numbers type without entirely removing dependent products from the theory. To do so we assume a universe $\synU_0$ containing basic inductive types, which is closed under $\Sigma$-types and identity types, but not function types. The elimination principle of the natural numbers $\synN$ is restricted to type families in $\synU_0$.

An illustrative portion of the concrete syntax for PRTT is presented in~\cref{fig:PRTTRules}. Later we shall only work with its formal account as presented in~\cref{fig:PRTT}.
\begin{figure*}
	\caption{Rules of $\prtt$.}
	\label{fig:PRTTRules}
	\raggedright Rules which differ from standard MLTT. All judgements $\Delta\vdash\mathcal{J}$ have an implicit prepended context $\Gamma$.
	\begin{minipage}{\fulltextwidth}
    \footnotesize
		\begin{align*}
			&\mprset{flushleft}\inferrule{\:}{\vdash\mr{U}_\alpha\type}
			\mprset{flushleft}\inferrule{\vdash A:\mr{U}_\alpha}{\vdash A:\mr{U}_{\alpha+1}}
			\mprset{flushleft}\inferrule{a:A\vdash B(a)\type}{\vdash (a:A)\to B(a)\type}
			\mprset{flushleft}\inferrule{\vdash A:\mr{U}_\alpha \\ a:A\vdash B(a):\mr{U}_\alpha}{\vdash (a:A)\to B(a) : \mr{U}_{\mr{max}(1,\alpha)}}\\
			&\mprset{flushleft}\inferrule{a:A\vdash b(a):B(a)}{\vdash\lambdaop (a:A) . b(a) : (a:A)\to B(a) \\\\ x:A\vdash (\lambdaop (a:A).b(a))(x)=b(x):B(x)}
			\mprset{flushleft}\inferrule{\vdash f:(a:A)\to B(a) }{a:A\vdash f(a) : B(a) \\\\ \vdash\lambdaop (a:A).f(a)=f : (a:A)\to B(a)}\\
			&\mprset{flushleft}\inferrule
			{\:}
			{\vdash\synN\type \\ \vdash \synN : \synU_0 \\\\ \vdash\synz:\synN \\ n:\synN\vdash\syns n:\synN}
			\mprset{flushleft}\inferrule
			{n:\synN\vdash C(n) : \synU_0 \\\\ \vdash c_{\synz} : C(\synz) \\\\ n : \synN, c :C(n)\vdash c_{\syns}(n,c) : C(\syns n)}
			{n:\synN\vdash\synind_{c_{\syns}, c_{\synz}}(n) : C(n) \\\\ \vdash\synind_{c_{\syns}, c_{\synz}}(\synz)=c_{\synz} : C(\synz) \\\\
				n:\synN\vdash\synind_{c_{\syns}, c_{\synz}}(\syns n)=c_{\syns}(n,\synind_{c_{\syns}, c_{\synz}}(n)) : C(\syns n)}
		\end{align*}
	\end{minipage}
\end{figure*}
\begin{figure*}
	\caption{Higher Order Abstract Syntax for $\prtt$.}
	\label{fig:PRTT}
  \raggedright Curly braces denote implicit arguments. The judgement levels $\alpha\leq\beta\leq\gamma$ range over a finite or countable linear order $\{0,\ldots\}$.
	\begin{minipage}{\fulltextwidth}
    \footnotesize
		\begin{align*}
			\mathrm{record}&\:\prtt : \mr{SIG} \:\mathrm{where}\\
			\tp\alpha&:\Box\\
			\tm\alpha&: \tp\alpha\to\Box\\
			\lift{\alpha}{\beta} &: \tp\alpha\to\tp\beta\\
			\_ &: \{A\}\to(\lift{\alpha}{\alpha} A =_{\tp\alpha} A)\\
			\_ &:\{A\}\to(\lift{\beta}{\gamma}\lift{\alpha}{\beta}A=_{\tp\gamma}\lift{\alpha}{\gamma}A)\\
			\_ &: \{A\}\to(\tm\alpha A=_\Box\tm\beta(\lift{\alpha}{\beta}A))\\
			\_ &:\{A,B\}\to(\lift{\alpha}{\beta}A=_{\tp\beta}\lift{\alpha}{\beta}B)\to (A=_{\tp\alpha} B)\\
			\Sigma_\alpha &: (A:\tp\alpha)\to(B:\tm\alpha A\to\tp\alpha)\to\tp\alpha\\
			\mr{pair}_\alpha &: \{A,B\}\to ((a : \tm\alpha A)\times\tm\alpha(\mathop B a))\cong\tm\alpha(\Sigma_\alpha(A,B))\\
			\_ &:\{A,B\}\to\lift{\alpha}{\beta}\Sigma_\alpha(A,B)=_{\tp\beta}\Sigma_\beta(\lift{\alpha}{\beta} A,\lift{\alpha}{\beta}\circ B)\\
			\mr{eq}_\alpha &: (A : \tp\alpha)\to(a,b:\tm\alpha A)\to\tp\alpha\\
			\mr{refl}_\alpha &: \{A\}\to(a : \tm\alpha A)\to\mr{eq}_\alpha(a,a)\\
			\mr{eqind}_\alpha&:
			\{A,a\}
			\to\bigl(P: (b:\tm\alpha A)\to (p:\mr{eq}_\alpha(a,b))\to\tp\alpha)\bigr)
			\to\{b,p\} 
			\to\tm\alpha(P(a,\mr{refl}(a)))
			\to\tm\alpha(P(b,p))\\
			\_&:\{A,a,P\}
			\to\bigl(s:\tm\alpha(P(a,\mr{refl}_\alpha(a)))\bigr)
			\to \bigl(s=_{\tm\alpha(P(a,\mr{refl}_\alpha(a)))}\mr{eqind}_\alpha(P,\{a\},\{\mr{refl}_\alpha(a)\},s)\bigr)\\
			\_ &:\{A\}\to\lift{\alpha}{\beta}\mr{eq}_\alpha(A)=_{\tp\beta}\mr{eq}_\beta(\lift{\alpha}{\beta}A)\\
			\Pi_\alpha &: (A:\tp\alpha)\to(B:\tm\alpha A\to\tp\alpha)\to\tp{\mr{max}(1,\alpha)}\\
			\lambdaop_\alpha &: 
			\{A,B\}
			\to ((x : \tm{\mr{max}(1,\alpha)}(A))\to\tm{\mr{max}(1,\alpha)}(B(x)))\cong\tm{\mr{max}(1,\alpha)}(\Pi(A,B))\\
			\_&:\{A,B\}\to\lift{\alpha}{\beta}\Pi_{\alpha}(A,B)=_{\beta}\Pi_{\beta}(\lift{\alpha}{\beta} A,\lift{\alpha}{\beta}\circ B)
			\quad (\text{where}\:\alpha>0)\\
			\emptyset &:\tp 0\\
			\mr{exfalso} &: (\mop B:\tm 0\emptyset\to\tp 0)\to(x:\tm 0\emptyset)\to\tm 0(\mop B x)\\
			\synunit &:\tp 0\\
			\star &: \tm 0\synunit\\
			\mr{unitind} &: 
			(\mop B:\tm 0\synunit\to\tp 0)
			\to(b:\tm 0(\mop B\star))
			\to(x:\tm 0\synunit)
			\to\tm 0(\mop B x)\\
			\_ &: \{B,b\}\to(\mr{unitind}(B,b,\star)=_{\tm 0(\mop B\star)}b)\\
			\synN&: \tp 0\\
			\synz&: \tm 0\synN\\
			\syns&: \tm 0\synN\to\tm 0\synN\\
			\synind&:(\mop B: \tm 0\synN\to\tp 0)\\
			&\quad\to(b_{\synz}:\tm 0(\mop B\synz))\\
			&\quad\to(b_{\syns}:(n : \tm 0\synN)\to\tm 0(\mop B n)\to \tm 0(\mop B(\syns n)))\\
			&\quad\to(x : \tm 0\synN)\to\tm 0(\mop B x)\\
			\_&:\{B,b_{\synz},b_{\syns}\}\to(\synind(B,b_{\synz},b_{\syns},\synz)=_{\tm 0(\mop B\synz)}b_{\synz})\\
			\_&:\{B,b_{\synz},b_{\syns},n\}\to\bigl(\synind(B,b_{\synz},b_{\syns},\syns n)=_{\tm 0(B(\syns n))} \mop{b_{\syns}}(\synind(B,b_{\synz},b_{\syns},n))\bigr)\\
			\synU_\alpha &: \tp{\alpha+1}\\
			\_ &:(\tm{\alpha+1}\synU_\alpha=_\Box\tp\alpha)
		\end{align*}
	\end{minipage}
\end{figure*}

A detailed account of the typical rules of MLTT is given in~\cite{rijke2022introduction}. The only notable differences between that system and ours are that $\Pi$-formation of a family of types of universe level $0$ raises the level by one, and that the induction principle for the natural numbers produces an open term $n:\synN\vdash\synind(n):B(n)$ rather than a function of type $(n:\synN)\to B(n)$. So, in addition to the rules in~\Cref{fig:PRTTRules}, we assume the standard rules for a reflexive, symmetric and transitive judgemental equality relation and variable substitution. Additionally, we assume $\Sigma$- and identity type closure for every universe and closure under $\Pi$-types for universes $\synU_\alpha$ with $\alpha>0$. Note that we do not assume univalence nor any extensionality principle for propositional equality. The universe levels $\alpha\leq\beta\leq\gamma$ belong to a finite or countable linear order $\{0<1<\ldots\}$. We denote meta-level $\Sigma$- or dependent pair types by $(a:A)\times B(a)$ and $\Pi$- or dependent function types by $(a:A)\to B(a)$. We make no distinction between a type $A$ and its code $A:\synU$ in a universe, although this is of course present in the formal account.

The logical framework converts a theory presented as a nested $\Sigma$-type in Agda-style record notation into a locally Cartesian closed (LCC) category of judgements $\prtt$. For example, $\Box$ denotes a judgement classifier and $\tm\alpha:\tp\alpha\to\Box$ introduces for every type judgement $A:\tp\alpha$ a judgement for its terms. A model of $\prtt$ is an LCC functor $\llb{-}_{\mr C}:\prtt\to\mr C$.

One part of the adequacy statement for $\prtt$ is the next theorem. Its counterpart is \Cref{thm:prttsound}.

\begin{theorem}
	The theory $\prtt$ is complete with respect to primitive recursion in the sense that any primitive recursive function can be defined in it.
\end{theorem}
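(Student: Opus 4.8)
The plan is to proceed by structural induction on the inductive definition of the primitive recursive functions, exhibiting for each $f:\setN^k\to\setN$ a term $\tilde f:\synN^k\to\synN$ of $\prtt$ whose judgemental computation rules reproduce the defining equations of $f$. I encode $\synN^k$ as an iterated non-dependent $\Sigma$-type, which lies in $\synU_0$ because $\synN:\synU_0$ and $\synU_0$ is closed under $\Sigma$; equivalently I work in a context $x_1:\synN,\dots,x_k:\synN$ of number variables. The numeral $\mr{num}(c)$ is the term $\syns^{c}\synz$. The basic functions are then immediate: a constant is $\lambdaop x.\,\mr{num}(c)$, the successor is $\syns$ itself, and the $i$-th projection is the corresponding iterated $\Sigma$-eliminator (equivalently a context projection). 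Closure under composition is just substitution of terms: given representatives $\tilde f$ and $\tilde g_1,\dots,\tilde g_m$, the term $\lambdaop x.\,\tilde f(\tilde g_1(x),\dots,\tilde g_m(x))$ represents the composite, and all the requisite $\Pi$-types are available since they merely land in $\synU_1$, which is permitted.

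The heart of the argument is the primitive recursion operator, and here the organisation matters. Given representatives $\tilde g$ for $g:\setN^l\to\setN$ and $\tilde h$ for $h:\setN^{l+2}\to\setN$, I keep the parameters $x:\synN^l$ free in the ambient context and apply the restricted eliminator $\synind$ with the \emph{constant} motive $C(n):=\synN$. Crucially, $C$ is a constant family valued in $\synU_0$, so the side condition $n:\synN\vdash C(n):\synU_0$ of the elimination rule is met. Taking base point $c_\synz:=\tilde g(x)$ and step $c_\syns(n,c):=\tilde h(n,c,x)$ yields, in the context $x:\synN^l,\,n:\synN$, a term $\synind_{c_\syns,c_\synz}(n):\synN$, which I abstract to $\tilde F:=\lambdaop n.\,\lambdaop x.\,\synind_{c_\syns,c_\synz}(n)$. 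The two computation rules for $\synind$ then give precisely
\begin{align*}
\synind_{c_\syns,c_\synz}(\synz)&=c_\synz=\tilde g(x),\\
\synind_{c_\syns,c_\synz}(\syns n)&=c_\syns\bigl(n,\synind_{c_\syns,c_\synz}(n)\bigr)=\tilde h\bigl(n,\synind_{c_\syns,c_\synz}(n),x\bigr),
\end{align*}
as judgemental equalities, matching the defining clauses $\mr{primrec}^l_{g,h}(0,x)=g(x)$ and $\mr{primrec}^l_{g,h}(n+1,x)=h(n,\mr{primrec}^l_{g,h}(n,x),x)$ exactly.

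The main obstacle, and indeed the conceptual payload of the construction, is that the recursion must be arranged so that its motive never leaves $\synU_0$. The naive formulation of parametrised recursion would recurse with motive $C(n):=\synN^l\to\synN$ into the function space, but this is a $\Pi$-type and hence excluded from $\synU_0$; this is exactly the door through which Ackermann-style growth would enter. Keeping the parameters free in the context rather than currying them into the motive sidesteps this entirely, and is what confines us to precisely the primitive recursive functions, the converse inclusion being the companion soundness result, \Cref{thm:prttsound}. Finally, to confirm that each $\tilde f$ genuinely \emph{represents} $f$ in the sense of the earlier definition, I would check by a routine induction on numerals, using exactly the judgemental computation rules displayed above, that $\tilde f(\mr{num}(n_1),\dots,\mr{num}(n_k))$ evaluates to $\mr{num}\bigl(f(n_1,\dots,n_k)\bigr)$, so that the representability square commutes after applying the global sections functor.
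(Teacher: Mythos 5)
Your proposal is correct and follows essentially the same route as the paper's (much terser) proof: basic functions directly, composition by substitution, and $\mr{primrec}^l$ via $\synind$ with a constant motive $C(n):=\synN$ in $\synU_0$, keeping the parameters free in the context --- which is exactly what the paper means by ``the extra variables are absorbed by the context.'' Your elaboration of why the parameters must stay in the context rather than being curried into a $\Pi$-typed motive, and the final numeral-evaluation check of representability, are just fuller write-ups of the same argument.
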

\begin{proof}
	The basic primitive recursive functions are clearly definable using $\synind$. Function composition is given by substitution. The term $\synind$ also covers the recursor $\mr{primrec}^l$. The extra variables are absorbed by the context.
\end{proof}

\begin{remark}
	The theory $\prtt$ is a subtheory of MLTT. As such, any model for MLTT is a model for $\prtt$. This includes any presheaf topos on a small category. In particular, we have a standard model
	\[\llb{-}_{\Set}:\prtt\to\Set\]
	in the topos of sets. This model maps the syntactic natural numbers $\synN$ to the actual natural numbers $\setN$.
	Below, we argue that any definable function \[\llb{f}_{\Set}:\Set(\llb{\synN}_{\Set},\llb{\synN}_{\Set})\]
	is actually primitive recursive by gluing along an interpretation of $\prtt$ in a topos of primitive recursive functions.
\end{remark}

\begin{remark}
	The calculus $\prtt$ can be amended with a natural numbers type $\vdash\synN_\alpha:\tp\alpha$ for $\alpha>0$ which has an elimination principle for the larger universes also closed under $\Pi$-types and which is mapped to the natural numbers object in the interpretations above. Initiality gives a coercion $\synN_\alpha\to\lift{0}{\alpha}\synN$.
\end{remark}

\section{Examples and Applications}\label{sec:examples}

To illustrate how to use $\prtt$, consider the Cantor Normal Forms for ordinals
less than $\varepsilon$, for instance as developed in~\cite{KNFX}.
First we can define a (primitive recursive) isomorphism $\synN \simeq \synunit + \synN \times \synN$, which allows us to think of numbers as unlabeled binary trees,
with a corresponding induction principle. We write $0$ for the element in the
left component
and $\omega^\alpha + \beta$ for elements in the right component.
By recursion, we define the ordering relation $<$ (with values in the booleans $\synunit + \synunit$) such that
\begin{align*}
  0 &< \omega^a+b \\
  \alpha < \gamma \to{} \omega^\alpha +\beta{} &< \omega^\gamma + \delta \\
  \beta < \delta \to{} \omega^\alpha +\beta{} &< \omega^\alpha + \delta,
\end{align*}
and a decidable predicate $\mathrm{isCNF}$ such that
\begin{align*}
  &\mathrm{isCNF}(0) \\
  \mathrm{isCNF}(\alpha) \to \mathrm{isCNF}(\beta) \to \mathrm{left}(\beta)\le \alpha
  \to{}&\mathrm{isCNF}(\omega^\alpha+\beta),
\end{align*}
where the function $\mathrm{left}$ gives the left subtree if it exists, else $0$,
and $\alpha\le\beta :\equiv (\alpha<\beta + \alpha=\beta)$.
Then $\prtt$ can prove $\mathrm{CNF} :\equiv \sum_{\alpha:\synN}\mathrm{isCNF}(\alpha)$
is totally ordered. Of course, by our soundness result, $\prtt$ cannot prove induction along $<$ on $\mathrm{CNF}$, as this would amount to induction up to $\varepsilon_0$.
But using the universe $\synU_1$, this can at least be stated. If we encode the syntax of arithmetic and a proof calculus for classical first order logic, we can then prove in $\prtt$ the consistency of Peano arithmetic (PA) assuming this induction principle.

Likewise, by encoding other finitary inductive type families, it is possible to encode more complicated ordinal notation systems and the syntax of more complicated foundational systems, such as type theory itself. It is then possible to define translation functions, for instance the double-negation translation from PA to Heyting's arithmetic (HA), forcing translations, realizability translations, etc.
This would be even easier in an extension of $\prtt$ with built-in support for such finitary inductive type families. We discuss this extension below in~\cref{sec:conclusion}.

\section{Semantics in a Topos of Primitive Recursive Functions}\label{sec:semanticsr}
This section begins with the construction  of a certain sheaf topos $\ShR$ of primitive recursive functions. The remainder of this section is dedicated to the proof of the following result.
\begin{theorem}\label{thm:interpretationR}
	There is a sound interpretation
	\[\llb{-}_{\ShR}:\prtt\to\ShR\]
	satisfying
	\[\llb{\synN}_{\ShR}=\yo_\oneR. \]
\end{theorem}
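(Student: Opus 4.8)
The plan is to build $\llb{-}_{\ShR}$ as a genuine model of the logical-framework signature $\prtt$, i.e. an LCC functor $\llb{-}_{\ShR}\colon\prtt\to\ShR$; equivalently, an assignment of an object or morphism of $\ShR$ to each field of the record in \Cref{fig:PRTT} making every listed judgemental equality hold on the nose. The starting point is that $\ShR=\Sh(\catR,\jfin)$ is a Grothendieck topos, hence locally Cartesian closed with finite colimits, a subobject classifier, and—granting a sufficient supply of Grothendieck universes in the metatheory—a cumulative hierarchy of Hofmann–Streicher universes $V_1\subseteq V_2\subseteq\cdots$ closed under $\Sigma$, intensional Id, and $\Pi$. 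For the levels $\alpha\ge 1$ I would simply set $\tp\alpha:=V_\alpha$, interpret $\tm\alpha$ by the associated generic family, and read off $\Sigma$-types, identity types, and $\Pi$-types from the topos structure, with the coercions $\lift{\alpha}{\beta}$ the universe inclusions. To force the lift coherences of \Cref{fig:PRTT} to hold strictly rather than up to isomorphism, I would take the $V_\alpha$ to be strong universes satisfying realignment, exactly as in the STC discipline of \Cref{sec:stc}. The level-$0$ classifier $\tp 0$ is the nonstandard ingredient and is constructed separately below.

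The interpretation of $\synN$ is dictated: set $\llb{\synN}_{\ShR}:=\yo_\oneR$. Since $\catR$ has finite products with $\oneR^{k}\times\oneR^{l}=\oneR^{k+l}$ and Yoneda preserves limits, $\yo_\oneR^{\,k}\cong\yo_{\oneR^{k}}$; as $\jfin$ is subcanonical the representables are sheaves, so $\Hom{\ShR}{\yo_\oneR^{\,k}}{\yo_\oneR}=\catR(\oneR^{k},\oneR)$ is exactly the set of primitive recursive functions $\setN^{k}\to\setN$. This is the whole point of the site and is what later pins down representability. The role of the finite cover topology is to make $\yo_\oneR$ a parametrized natural numbers object: the zero/successor cocone $\zeroR+\yo_\oneR\to\yo_\oneR$ becomes a covering, so $\yo_\oneR$ acquires the universal property needed for recursion, while the finitary inductives $\emptyobject$ and $\unit$ are modelled by the corresponding (sheafified) finite colimits, with their computation rules holding by the universal properties.

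The crux is the level-$0$ universe. I would \emph{not} take $\tp 0$ to be the full Hofmann–Streicher universe (which is automatically closed under $\Pi$ in any topos), but a cut-down universe classifying a pullback-stable class $\mathcal S_0$ of display maps: the smallest such class containing $\yo_\oneR$ and the base inductives, closed under composition/$\Sigma$ and under the diagonals defining Id, but deliberately \emph{not} closed under dependent products. One shows $\mathcal S_0$ is locally small, hence classified by an object so that $\tp 0$ is this classifier, $\synU_0\in V_1$ with $\tm 1\,\synU_0=\tp 0$ (strictly, via realignment), and $\lift{0}{1}$ the inclusion; dependent product of an $\mathcal S_0$-family then lands not in $\tp 0$ but in $V_1=\synU_1$, matching the formation rule $(a:A)\to B(a):\tp{\mr{max}(1,\alpha)}$. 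The genuinely delicate point, and the main obstacle, is soundness of the \emph{restricted} eliminator: for $C\colon\yo_\oneR\to\tp 0$ I must produce $\synind$ together with its strict computation rules at $\synz$ and $\syns$, stable under substitution. This is where the design pays off, since $\mathcal S_0$-families are built only from representables and finite inductives and $\yo_\oneR$ is a parametrized NNO under $\jfin$, recursion into them is definable using only the NNO universal property and \emph{no} exponentials—precisely the feature that keeps the model inside primitive recursion. Verifying this recursor exists and is natural is the heart of the proof; the remaining record equations ($\beta$/$\eta$, the unit and $\Sigma$ computation rules, and the lift coherences) then follow from the topos structure together with realignment.
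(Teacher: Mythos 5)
Your construction of the higher universes, the interpretation $\llb{\synN}_{\ShR}:=\yo_\oneR$, and the observation that subcanonicity pins down $\Hom{\ShR}{\yo_\oneR^k}{\yo_\oneR}$ as the primitive recursive functions all match the paper. But the central step of your proof---soundness of the restricted eliminator---rests on the claim that the topology $\jfin$ makes $\yo_\oneR$ a \emph{parametrized natural numbers object}, so that it ``acquires the universal property needed for recursion.'' This is false, and the paper proves the opposite (the theorem immediately following the definition of $\ShR$ in \Cref{sec:semanticsr}): $\yo_\oneR$ is not an NNO in $\PSh\catR$ nor in $\ShR$. Indeed, $\ShR$ is a topos, hence Cartesian closed, and a parametrized NNO supports recursion valued in \emph{any} object, in particular in the exponential $\yo_\oneR\Rightarrow\yo_\oneR$; that is exactly the recursion scheme that defines the Ackermann function, which would then be an endomorphism of $\yo_\oneR$, contradicting $\ShR(\yo_\oneR,\yo_\oneR)\cong\catR(\oneR,\oneR)$. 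It is true that $\{\synz,\syns\}$ is a $\jfin$-cover, but what this buys is only that every generalized element of $\yo_\oneR$ is \emph{locally} zero or a successor, i.e.\ case analysis: this is what powers the predecessor (\Cref{lem:predsec}) and decidable equality (\Cref{lem:deceq}), and it yields no iteration principle whatsoever. So what you yourself identify as ``the heart of the proof'' relies on a property that provably fails.

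The paper's actual mechanism is different, and neither of its two ingredients is a universal property. First, $\yo_\oneR$ admits (parametrized, non-dependent) recursion \emph{into itself} (\Cref{thm:pruyo1}): via $(\yo_\oneR\Rightarrow F)(\oneR^l)\cong F(\oneR^{l+1})$ the recursor is written down arity-wise as $\mr{primrec}^l$---this explicit presheaf computation is the only place primitive recursion enters the model. Second, the level-$0$ universe is interpreted not as an inductively generated class of display maps but as $\prvbar$: types $X$ \emph{equipped with} a retraction $r:\yo_\oneR\to X+\unit$ and section $s$; recursion into such an $X$ is obtained by encoding along $s$, recursing in $\yo_\oneR$, and decoding along $r$ (\Cref{thm:prvbartoprv}), with a genuinely delicate argument (predecessor plus a pullback square) needed for the successor computation rule, and with $\Sigma$- and identity-type closure coming from pairing and decidable equality. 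Your class $\mathcal{S}_0$, specified only by closure conditions, carries no such encoding data, so even granting that it is classified by some object there is no way to run a recursion into its members. Relatedly, membership of the finite types in the universe is not automatic: it is \Cref{lem:sheaf2retract} (the sheafification of $\bool$ is a retract of $\yo_\oneR$), which is precisely where the sheaf topology---rather than presheaves, cf.\ \Cref{rem:notpresheaves}---is needed; your proposal instead attributes the role of the topology to the nonexistent NNO property. A repaired version of your display-map class would have to build the retraction data into its definition, at which point it becomes the paper's $\prvbar$.
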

Here $\yo$ is the Yoneda embedding, and $\oneR$ is a designated object in the site,
as explained below.
The topos $\ShR$ has the property that morphisms \[\ShR(\llb{\synN}_{\ShR},\llb{\synN}_{\ShR})\]
are exactly primitive recursive functions $\setN\to\setN$, which forms a cornerstone of our gluing argument. The sheaf $\llb{\synN}_{\ShR}$ is a (non-initial) natural numbers algebra $\yo_\oneR$ and $\llb{\synU_0}_{\ShR}$ is described as a type $\prvbar$ of types $X$ with a retraction $r:\yo_\oneR\to X+\unit$. To show that $\prvbar$ contains the basic inductive types we need show that the sheafification of the presheaf $\bool$ is a retract of $\yo_\oneR$. We prove soundness of the elimination principle of $\synN$ into $X$ by encoding terms $x:X$ as numbers $s(x):\yo_\oneR$ via the section $s$ of $r$. The interpretation of the other type formers and higher universes is standard.

In this section we fix a primitive recursive bijection with primitive recursive inverse of type $\setN\to\setN\times\setN$.

\subsection{A Sheaf Topos of Primitive Recursive Functions}
We begin with the definition of a Grothendieck topology on a category $\catR$ of arities and primitive recursive functions.
\begin{definition}
	The category $\catR$ has two objects $\zeroR$ and $\oneR$, related by morphisms $\catR(\oneR, \oneR)$, the primitive recursive functions of type $\setN\to\setN$, and $\catR(\zeroR, \oneR):=\setN$. The object $\zeroR$ is terminal. We sometimes write $\oneR^0$ instead of $\zeroR$.
\end{definition}

\begin{lemma}\label{lem:rfinprod}
	The category $\catR$ has finite products. It does not have all finite limits.
\end{lemma}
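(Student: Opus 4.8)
The plan is to prove the two assertions separately. For the existence of finite products, it suffices to exhibit a terminal object together with all binary products. The terminal object is $\zeroR$ by definition, so the only real content is the computation of $\oneR\times\oneR$, and I claim this product is again $\oneR$, realized through the fixed primitive recursive bijection $b\colon\setN\to\setN\times\setN$ with primitive recursive inverse. Writing $p_1,p_2\colon\setN\times\setN\to\setN$ for the set-theoretic projections, I take $\pi_i:=p_i\circ b$ as the structure maps $\oneR\to\oneR$; each $\pi_i$ is primitive recursive, hence a genuine morphism of $\catR$.

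To verify the universal property I would check the two admissible domains. Given $f,g\colon\oneR\to\oneR$, that is primitive recursive functions $f,g\colon\setN\to\setN$, the mediating morphism is $n\mapsto b^{-1}(f(n),g(n))$, which is again primitive recursive; the projection equations reduce to $p_i\circ b\circ b^{-1}=p_i$, and uniqueness follows because $b$ is a bijection. Given $f,g\colon\zeroR\to\oneR$, that is numerals $f,g\in\setN=\catR(\zeroR,\oneR)$, the mediating point is $b^{-1}(f,g)\in\setN$, and the same two equations hold. Products involving $\zeroR$ are trivial since $\zeroR$ is terminal, so every binary product exists, and by induction all finite products exist.

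For the failure of general finite limits, I would exhibit a parallel pair whose equalizer cannot exist. The key observation is that the global sections $\catR(\zeroR,-)$ send $\zeroR$ to a one-element set (as $\zeroR$ is terminal) and $\oneR$ to $\setN$, so in particular every object of $\catR$ has a nonempty set of global points. Now consider the successor function $n\mapsto n+1$ and the identity, both as morphisms $\oneR\to\oneR$. Any equalizer $E$ of this pair would, by its universal property applied at the domain $\zeroR$, satisfy $\catR(\zeroR,E)\cong\{m\in\setN : m+1=m\}=\emptyset$, using that composing a primitive recursive map with a point is evaluation. But no object of $\catR$ has an empty set of global points, so no such $E$ exists, and $\catR$ lacks this equalizer.

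I expect the only delicate point to be the bookkeeping of the universal property of $\oneR\times\oneR$ over both admissible domains $\zeroR$ and $\oneR$ at once, rather than any genuine obstruction; the nonexistence of finite limits is then a short counting argument on global points.
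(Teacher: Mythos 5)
Your proposal is correct, and the first half (finite products) matches the paper's approach exactly: the paper simply asserts $\oneR\times\oneR\cong\oneR$ via the fixed primitive recursive pairing bijection, which is precisely what you verify in detail, together with the terminal object $\zeroR$. For the failure of finite limits, however, you take a genuinely different route. The paper's counterexample is a \emph{pullback}: the cospan $\lambdaop\_.0:\oneR\to\oneR\leftarrow\oneR:\lambdaop\_.1$ of two distinct constant maps admits no commutative square at all (any cone would force the constant map $0$ to equal the constant map $1$), so the category of cones is empty and no pullback can exist --- no inspection of candidate limit objects is needed. Your counterexample is an \emph{equalizer}: the pair $\mathrm{succ},\mathrm{id}:\oneR\to\oneR$ has no fixed points, so a would-be equalizer $E$ would have $\catR(\zeroR,E)=\emptyset$, which no object of $\catR$ satisfies. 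Your argument is sound (the map $\catR(\zeroR,E)\to\{m\in\setN : m+1=m\}$, $h\mapsto e\circ h$, is well defined, which already gives emptiness; the full bijection via monicity of equalizers is not even needed), but it relies on an inventory of the objects of $\catR$ --- it works because $\catR$ has only two objects, both with global points --- whereas the paper's argument is object-independent: once no cone exists, no limit can exist in any extension of the hom-structure. On the other hand, your argument transfers verbatim to the equivalent many-object category $\catR'$ of \Cref{rem:equivr}, since all objects $\oneR^l$ there also have global points, so both counterexamples are equally robust for the purposes of this paper.
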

\begin{proof}
	We have that $\oneR\times\oneR\cong\oneR$ in $\catR$.
	However, the cospan
	\[\lambdaop \_ . 0 : \oneR\to\oneR\leftarrow\oneR : \lambdaop \_ . 1\]
	cannot be completed to a square in $\catR$.
\end{proof}

\begin{remark}\label{rem:equivr}
	The category $\catR$ is equivalent to the category $\catR'$ with countably many objects $\oneR^l$, and morphisms $\catR'(m,n)$ primitive recursive functions $\setN^m\to\setN^n$.
  This is the Lawvere theory for primitive recursion. It is analogous to the category $\mathbb P$ of arities and polytime functions in~\cite{hofmann_application_1997}, where it is used to proof soundness of a caluculus of polytime functions.
\end{remark}

We define a Grothendieck topology on $\catR$ because the topos $\PSh\catR$ of presheaves on $\catR$ does not have the desired structure, see \Cref{rem:notpresheaves}.

\begin{lemma}
	For primitive recursive $f:\setN^n\to \setN$ and $g:\setN^m\to\setN$, the set
	\[\{ f=g\}:=\{ (x,y)\in\setN^n\times\setN^m\mid fx=gy\}\]
	is decidable. This means that there is a primitive recursive function
	\[\chi_{\{f=g\}}:\setN^{n+m}\to\setN\]
	such that $\chi_{\{f=g\}}(x)=1\Leftrightarrow x\in \{f=g\}$ and $\chi_{\{f=g\}}(x)=0\Leftrightarrow x\not\in \{f=g\}$.
	Furthermore, complements and intersections of decidable subsets are decidable.
\end{lemma}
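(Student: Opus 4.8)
The plan is to reduce everything to the single standard fact that the equality relation on $\setN$ is primitive recursive, together with closure of the primitive recursive functions under composition. First I would record the auxiliary functions needed. The predecessor function $\mr{pred}$ is primitive recursive, given by $\mr{pred}(0)=0$ and $\mr{pred}(n+1)=n$; iterating it yields truncated subtraction (monus) $a\mathbin{\dot{-}}b$, defined by $a\mathbin{\dot{-}}0=a$ and $a\mathbin{\dot{-}}(b+1)=\mr{pred}(a\mathbin{\dot{-}}b)$, which is primitive recursive by a single application of the recursion operator. The zero test $\mr{isZero}$, with $\mr{isZero}(0)=1$ and $\mr{isZero}(n+1)=0$, is likewise primitive recursive. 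Composing these gives the equality indicator $\mr{eq}(a,b):=\mr{isZero}\bigl((a\mathbin{\dot{-}}b)+(b\mathbin{\dot{-}}a)\bigr)$, which satisfies $\mr{eq}(a,b)=1$ exactly when $a=b$ and $\mr{eq}(a,b)=0$ otherwise.

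With $\mr{eq}$ in hand, the characteristic function of $\{f=g\}$ is obtained by composition. Writing an input in $\setN^{n+m}$ as a pair $(x,y)$ with $x\in\setN^n$ and $y\in\setN^m$ extracted by the (basic primitive recursive) projections, I set $\chi_{\{f=g\}}(x,y):=\mr{eq}(f(x),g(y))$. Since $f$ and $g$ are primitive recursive by hypothesis, and the projections and $\mr{eq}$ are primitive recursive, closure under composition makes $\chi_{\{f=g\}}$ primitive recursive; by construction it takes the value $1$ on $\{f=g\}$ and $0$ off it, which is exactly the claimed decidability.

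For the closure statements, suppose $S,T\subseteq\setN^k$ are decidable with primitive recursive characteristic functions $\chi_S,\chi_T$ valued in $\{0,1\}$. The complement of $S$ then has characteristic function $x\mapsto 1\mathbin{\dot{-}}\chi_S(x)$, which is primitive recursive and interchanges $0$ and $1$; the intersection $S\cap T$ has characteristic function $x\mapsto\chi_S(x)\cdot\chi_T(x)$, primitive recursive because multiplication is, and equal to $1$ precisely when both factors are. Unions then follow by De Morgan, should they be needed later.

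This argument is entirely routine, and there is no real obstacle beyond assembling the standard primitive recursive building blocks. The only point demanding any care is verifying that the projection-and-composition recipe genuinely remains within the primitive recursive functions, that is, matching the informal $(x,y)$ bookkeeping to the formal closure under composition with projections; but this is immediate from the definition of primitive recursiveness recalled above.
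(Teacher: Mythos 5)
Your proof is correct and takes essentially the same approach as the paper: both reduce the claim to composing $f$ and $g$ with a primitive recursive equality indicator built from truncated subtraction and a zero test, the paper's formula $1-\sgn(\mr{max}(fx,gy)-\mr{min}(fx,gy))$ being the same function as your $\mr{isZero}\bigl((a\mathbin{\dot{-}}b)+(b\mathbin{\dot{-}}a)\bigr)$ applied to $fx,gy$. You additionally verify the closure under complements and intersections via $1\mathbin{\dot{-}}\chi_S$ and $\chi_S\cdot\chi_T$, which the paper's one-line proof leaves implicit.
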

\begin{proof}
	Use
	\[\chi_{\{ f=g\}}(x,y)=1-\sgn(\mr{max}(fx,gy)-\mr{min}(fx,gy)),\]
	where $\sgn 0=0$ and $\sgn(n+1)=1$.
\end{proof}

\begin{lemma}\label{lem:jfinbasis}
	Let $\jfin$ be the Grothendieck topology on $\catR$ generated by the basis $\bfin$ consisting of finite jointly surjective families. This means that a family of morphisms $\{f_i:\oneR^{n_i}\to\oneR\}$ with  $n_i\in\{0,1\}$ is basic iff it is finite and the induced map out of the coproduct in $\Set$ is surjective.
\end{lemma}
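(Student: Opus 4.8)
The plan is to verify that $\bfin$ satisfies the axioms of a \emph{coverage}, which is the appropriate notion here since $\catR$ lacks pullbacks by \Cref{lem:rfinprod}; a coverage generates a unique Grothendieck topology, which we name $\jfin$. Two of the conditions are immediate. The identity $\mathrm{id}_{\oneR}$ is the primitive recursive identity function, which is surjective, so the singleton $\{\mathrm{id}_{\oneR}\}$ is basic; and on the terminal object $\zeroR$ we take the trivial cover. Closure under composition (transitivity) also follows easily: if $\{f_i\}$ is jointly surjective and each $\{g_{ij}\}$ is jointly surjective onto the domain of $f_i$, then $\bigcup_{i,j}\mathrm{im}(f_i g_{ij})=\bigcup_i f_i(\bigcup_j\mathrm{im}(g_{ij}))=\bigcup_i\mathrm{im}(f_i)=\setN$, and finiteness and the arity constraint $n\in\{0,1\}$ are clearly preserved.

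The crux is the stability (coverage) axiom: given a basic cover $\{f_i:\oneR^{n_i}\to\oneR\}_{i=1}^k$ and an arbitrary $g:V\to\oneR$, I must produce a basic cover of $V$ each of whose composites with $g$ factors through some $f_i$. If $V=\zeroR$, then $g$ is a numeral $c\in\setN$; by joint surjectivity $c=f_i(y)$ for some $i$ and $y$, and the numeral $y:\zeroR\to\oneR^{n_i}$ witnesses the factorization, so the trivial cover suffices. The interesting case is $V=\oneR$, where $g:\setN\to\setN$ is primitive recursive and no genuine pullback of $f_i$ along $g$ need exist in $\catR$. Here I exploit the isomorphism $\oneR\cong\oneR^2$ together with the fixed primitive recursive pairing to let a refining map carry an explicit witness variable, and I use the decidability of the set $\{g=f_i\}$ from the preceding lemma. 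Concretely, call an index $i$ \emph{relevant} if $\mathrm{im}(g)\cap\mathrm{im}(f_i)\neq\emptyset$, and for each relevant $i$ fix a default $\delta_i\in\setN$ and witness $y_i'\in\setN^{n_i}$ with $g(\delta_i)=f_i(y_i')$. Define $h_i:\setN^2\to\setN$ and $k_i:\setN^2\to\setN^{n_i}$ by $h_i(x,y)=x,\ k_i(x,y)=y$ when $g(x)=f_i(y)$, and $h_i(x,y)=\delta_i,\ k_i(x,y)=y_i'$ otherwise; both are primitive recursive since the branching condition is decidable, and $g\circ h_i=f_i\circ k_i$ holds identically by construction. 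Precomposing with the pairing bijection presents each $h_i$ as a basic morphism $\oneR\to\oneR$.

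It remains to check joint surjectivity of $\{h_i\}_{i\text{ relevant}}$ onto $V$: for any $x\in\setN$, joint surjectivity of $\{f_i\}$ gives $i,y$ with $f_i(y)=g(x)$, so $i$ is relevant and $h_i(x,y)=x$, placing $x$ in the image. Thus $\{h_i\}_{i\text{ relevant}}$ is a finite, jointly surjective family refining $\{f_i\}$ along $g$, as required. The main obstacle is precisely this stability step, and its two apparent difficulties are resolved at the meta-level rather than internally: the absence of the pullback object is circumvented by attaching a witness coordinate via $\oneR\cong\oneR^2$, and although \emph{relevance} of an index is not itself a decidable (let alone primitive recursive) property, this is immaterial, since the coverage axiom only asserts the \emph{existence} of a refining family, which we may exhibit in the classical metatheory while each individual map $h_i$ in it remains genuinely primitive recursive.
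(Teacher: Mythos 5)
Your proof is correct, and it runs in the same overall frame as the paper's: both treat $\bfin$ as a basis in the pullback-free (coverage) sense, dispose of the identity and transitivity axioms in a line, and isolate stability along a morphism $g$ as the only substantive point, which both resolve classically at the meta-level using the decidability lemma and the fixed pairing bijection $\oneR\cong\oneR\times\oneR$. Where you differ is the construction of the refining family. The paper partitions the matching pairs $\{(x,y)\mid \exists i,\ f_i(x)=g(y)\}$ into disjoint decidable sets $S_i$ (``first index that matches''), and then covers by finitely many constants when $S_i$ is finite and by a primitive recursive enumeration factoring through $S_i$ when it is infinite. You instead attach to each relevant index $i$ a single total map $h_i$ on $\oneR\cong\oneR^2$ that returns the first coordinate when the decidable condition $g(x)=f_i(y)$ holds and a fixed default $\delta_i$ otherwise, together with an explicit factoring map $k_i$. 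Your version buys a cleaner verification: no finite/infinite case split, no disjointness bookkeeping, and the identities $g\circ h_i=f_i\circ k_i$ hold on the nose by construction. It is also more airtight on a point the paper leaves implicit: an infinite primitive-recursively decidable subset of $\setN$ need not admit a primitive recursive \emph{increasing} enumeration (consider the range of the diagonal Ackermann function, whose graph is primitive recursively decidable), so the paper's ``enumeration'' must itself be realized by essentially the same default-padding trick you make explicit. What the paper's formulation buys in exchange is the disjointness of the $S_i$, which is reused later: the proof of \Cref{lem:sheaf2retract} appeals to ``a similar construction as the one used to obtain the disjoint $S_i$'' to normalize matching families for $\bool^+$. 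Your argument does not yield that by-product, but it is not needed for the present lemma.
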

\begin{proof}
	We show that $\jfin$ is indeed a basis. It is clear that isomorphisms define basic covers and that families of composites of basic covering families are basic covers as well. We need to check that if $\{ f_i\}\in \bfin(\oneR)$, then for any $g:\catR(\oneR,\oneR)$ there exists a basic cover $\{ h_j\}\in \bfin(\oneR)$ such that for each $j$, the morphism $g\circ h_j$ factors through some $f_i$. Covers of $\zeroR$ are trivial and singletons $\zeroR\to\oneR$ can be replaced by constant maps $\oneR\to\oneR$. 
	Let 
	\[S_i:=\{f_i=g\land f_1\neq g\land\cdots\land f_{i-1}\neq g\}.\]
	Then the $S_i$ cover $\oneR$ and are pairwise disjoint. For every finite, non-empty $S_i$ define finitely many $h_{i,j}:\zeroR\to\oneR$ picking out the elements of $S_i$. For every infinite $S_k$ define an enumeration $h'_k:\oneR\to\oneR$ factoring through $S_k$. Clearly, for each $i,k$ and $j$ we have $f_i=g\circ h_{i,j}$ and $f_k=g\circ h'_k$, and the $h_{i,j}$ and $h'_k$ together cover $\oneR$.
\end{proof}
We denote the sheaf topos as follows:
\[\ShR:=\Sh(\catR,\jfin)\]
The topology $\jfin$ is subcanonical, so $\yo_\oneR$ is a sheaf and
\[\ShR(\yo_\oneR,\yo_\oneR)\cong\catR(\oneR,\oneR).\]

Just like any Grothendieck topos, $\ShR$ has a natural numbers object $\setN$.
\begin{theorem}
	The representable sheaf $\yo_\oneR$ is not a natural numbers object in $\PSh\catR$ nor in $\ShR$.
\end{theorem}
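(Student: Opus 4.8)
The plan is to show that $\yo_\oneR$ cannot be isomorphic to the canonical natural numbers object of either topos, and then invoke the standard fact that natural numbers objects are unique up to isomorphism. The invariant I would use to separate the two objects is the cardinality of their endomorphism monoids.

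First I would identify the canonical NNO. In the presheaf topos the functor $X\mapsto\unit+X$ is computed pointwise, so its initial algebra is the constant presheaf $\Delta\setN$, which moreover decomposes as the coproduct $\coprod_\setN\unit$ of countably many copies of the terminal object. Since sheafification $L$ is a left exact left adjoint, it preserves the terminal object, coproducts, and initial algebras, so the NNO of $\ShR$ is $N:=L(\coprod_\setN\unit)\cong\coprod_\setN\unit$, again a coproduct of terminals, now computed in $\ShR$. In either topos I then compute
\[
\mathrm{End}(N)=\Hom{}{\coprod_\setN\unit}{N}\cong\prod_\setN\Hom{}{\unit}{N}=\Gamma(N)^{\setN},
\]
and since $N$ carries the two distinct global points $\underline 0\neq\underline 1$, this set has cardinality at least $2^{\aleph_0}$; in particular $\mathrm{End}(N)$ is uncountable.

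Next I would compute the endomorphisms of $\yo_\oneR$. By the Yoneda lemma, $\Hom{\PSh\catR}{\yo_\oneR}{\yo_\oneR}\cong\yo_\oneR(\oneR)=\catR(\oneR,\oneR)$, and because $\jfin$ is subcanonical the same holds in the sheaf topos, giving $\Hom{\ShR}{\yo_\oneR}{\yo_\oneR}\cong\catR(\oneR,\oneR)$. By definition of $\catR$ this is exactly the set of primitive recursive functions $\setN\to\setN$, which is countable. A countable monoid cannot be isomorphic to an uncountable one, so $\yo_\oneR\not\cong N$, and therefore $\yo_\oneR$ is not a natural numbers object in $\PSh\catR$ nor in $\ShR$.

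Conceptually, the uncountability of $\mathrm{End}(N)$ reflects that the NNO supports recursion into arbitrary objects of the (locally cartesian closed) topos, including the exponential $\yo_\oneR^{\yo_\oneR}$; this higher-type recursion (Gödel's System $\mathrm T$) defines functions such as the Ackermann function, which is not primitive recursive, and this is the moral reason the representable cannot be initial. The step I expect to require the most care is the sheaf-topos case: one must confirm that sheafification genuinely preserves the NNO and its coproduct decomposition, and that subcanonicity of $\jfin$ is precisely what pins the endomorphisms of the representable $\yo_\oneR$ down to the primitive recursive functions rather than to the much larger set $\Gamma(N)^{\setN}$ available to the disconnected object $N$.
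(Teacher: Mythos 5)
Your proof is correct, but it takes a genuinely different route from the paper's. The paper argues directly that no successor-commuting map $\yo_\oneR\to\setN$ can exist: by the Yoneda lemma, any map from the representable into the constant-presheaf NNO is classified by a single number $n$ and is therefore constant, so chasing a point around the successor square gives the contradiction $n+1=n$; it also records a one-line alternative (if $\yo_\oneR$ were the NNO, the Ackermann function would be representable, hence primitive recursive), which is precisely the observation you relegate to your closing conceptual remark rather than using as the proof. You instead identify the NNO as $\coprod_{\setN}\unit$, note that sheafification preserves this decomposition, compute $\mathrm{End}(N)\cong\Gamma(N)^{\setN}$ (uncountable, since $N$ has at least two global points), set this against the countable endomorphism monoid $\catR(\oneR,\oneR)$ of $\yo_\oneR$ obtained from Yoneda plus subcanonicity, and conclude by uniqueness of NNOs. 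Your cardinality argument buys uniformity: it handles $\PSh\catR$ and $\ShR$ on exactly the same footing, whereas the paper's Yoneda chase, as written with identity restriction maps, is really an argument about the constant presheaf $\Delta\setN$ and needs a little extra care for the sheafified NNO of $\ShR$. The paper's argument in turn is more elementary (no appeal to preservation of NNOs under inverse images, no coproduct decomposition, no cardinality) and proves something stronger: there is no successor-preserving map from $\yo_\oneR$ to the NNO at all, not merely no isomorphism. One small point you should make explicit in your version: the distinctness $\underline{0}\neq\underline{1}$ of global points of $N$ in $\ShR$ requires nondegeneracy of the topos, which holds here because no object of $(\catR,\jfin)$ is covered by the empty family --- a fact the paper itself records when proving that $\bool$ is separated.
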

\begin{proof}
	There are two ways to see this. One is that, if $\yo_\oneR\cong\setN$ were true, then the Ackermann function would be representable and hence primitive recursive.
	
	Alternatively, assume there were a natural transformation $\alpha$ making the square
	\[
	\begin{tikzcd}
		\yo_\oneR
		\arrow[r, "\syns"]
		\arrow[d, swap, "\alpha"]
		& \yo_\oneR
		\arrow[d, "\alpha"]\\
		\setN
		\arrow[r, swap, "+1"]
		& \setN
	\end{tikzcd}
	\]
	commute. By the Yoneda lemma, there is a number $n$ such that at component $k$, $\alpha$ acts as
	\[\alpha(k)(f)=\setN (f)(n)=\mr{id}_{\setN}(n)=n,\]
	where $f:\setN^k\to\setN$ is primitive recursive. Chasing $k:\yo_\oneR$ along both legs of the square above, we get the contradiction
	\[\alpha(k)+1=n+1\neq n=\alpha(\syns (k)).\qedhere\]
\end{proof}

\subsection{Universes}
Let us have a look at some candidates for $\llb{\synU_0}_{\ShR}$.
\begin{definition}\label{def:pruniverses}
	We assume strong cumulative universes $\justV_0<\justV_1$ in $\ShR$ and define the universes $\prv,\prvbar$ and $\cprv$. We have the types (as defined in the internal language)
	\begin{align*}
		\prv:=&(X:\justV_0)\\
		&\times\Bigl((g:X)\\
		&\quad\to(h:\yo_\oneR\to X\to X)\\
		&\quad\to\bigl((f:\yo_\oneR\to X)\times\mr{comp}(f,g,h)\bigr)\Bigr)
	\end{align*}
	and
	\begin{align*}
		\cprv:=(X:\justV_0)\times\Bigl((&\Gamma : \justV_0)\\
		&\to(g:\Gamma\to X)\\
		&\to(h:\Gamma\to\yo_\oneR\to X\to X)\\
		&\to\bigl((f:\Gamma\to\yo_\oneR\to X)\\
		&\qquad\times((\gamma : \Gamma)\to\mr{comp}(f^\gamma,g^\gamma, h^\gamma))\bigr)\Bigr)
	\end{align*}
	with
	\begin{align*}
		\mr{comp}(f,g,h):=&(f(\synz)=g)\\
		&\times\bigl((n:\yo_\oneR)\to(f(\syns n)=h(n,fn))\bigr).
	\end{align*}	
	We also have the object
	\begin{align*}
		\prvbar:=&(X:\justV_0)\times (r : \yo_\oneR\to (X+\unit))\\
		&\times(s : (X+\unit)\to\yo_\oneR)\times (r\circ s = \mr{id}_{X+\unit})
	\end{align*}
	of types $X$ together with a retraction $\yo_\oneR\to X+\unit$.
\end{definition}
The universe $\prv$ consists of the $\justV_0$-small types which $\synN$ can eliminate into. The variant $\cprv$ captures $\yo_\oneR$ as a parametrised natural numbers algebra by introducing an arbitrary context $\Gamma$.
Ideally, we would like to put $\llb{\synU_0}_{\ShR}:=\prv$,
but we were unable to prove that $\prv$ is closed under $\Sigma$-types. The same problem holds for $\cprv$. Instead, we define
\[\llb{\synU_0}_{\ShR}:=\prvbar\]
and use coercions
\[\prvbar\to\prv\leftrightarrow\cprv\]
to show that functions $\yo_\oneR\to X$ with $X:\prvbar$ can be defined by induction.

\begin{lemma}\label{lem:praddcontext}
	There is a retraction $\phi:\cprv\to\prv$ forgetting the context $\Gamma$ and preserving the base type $X$.
\end{lemma}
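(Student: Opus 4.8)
The plan is to exhibit an explicit section $\psi:\prv\to\cprv$ and to show that the context-forgetting map $\phi$ is a left inverse to it. Both maps will act as the identity on the underlying type $X:\justV_0$, modifying only the recursor data; the whole argument takes place in the internal language of $\ShR$.

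First I would define $\phi:\cprv\to\prv$ by instantiating the parameter object at the terminal object $\Gamma:=\unit$. Given $(X,\mr{prec})\in\cprv$ together with data $g:X$ and $h:\yo_\oneR\to X\to X$ for the unparametrised recursor, I form the constant families $\bar g:=\lambda\_.\,g:\unit\to X$ and $\bar h:=\lambda\_.\,h:\unit\to\yo_\oneR\to X\to X$, apply $\mr{prec}$ to obtain $(f,p):=\mr{prec}(\unit,\bar g,\bar h)$, and return the pair $(f(\star),p(\star))$. Its second component is exactly a proof of $\mr{comp}(f(\star),g,h)$, since $\bar g(\star)\equiv g$ and $\bar h(\star)\equiv h$. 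This is the map that forgets the context while leaving $X$ untouched.

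Next I would define the section $\psi:\prv\to\cprv$ by pointwise application of the unparametrised recursor. Given $(X,\mr{rec})\in\prv$ and parametrised data $\Gamma:\justV_0$, $g:\Gamma\to X$, $h:\Gamma\to\yo_\oneR\to X\to X$, I set $f:=\lambda\gamma.\,\pi_1(\mr{rec}(g(\gamma),h(\gamma)))$ and take the accompanying proof to be $\lambda\gamma.\,\pi_2(\mr{rec}(g(\gamma),h(\gamma)))$, which witnesses $\mr{comp}(f^\gamma,g^\gamma,h^\gamma)$ for every $\gamma$. Again $X$ is preserved on the nose.

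Finally I would verify $\phi\circ\psi=\mr{id}_{\prv}$. Composing, the unparametrised recursor produced by $\phi(\psi(X,\mr{rec}))$ on input $(g,h)$ instantiates $\Gamma=\unit$, whose unique point $\star$ collapses the pointwise application of $\psi$ to $\mr{rec}(\bar g(\star),\bar h(\star))\equiv\mr{rec}(g,h)$; projecting at $\star$ then returns $\mr{rec}(g,h)$ definitionally. To upgrade this to strict equality of the two recursors I would appeal to function extensionality in $\ShR$ for the quantifiers over $g$ and $h$, and to the fact that $\mr{comp}(f,g,h)$ is a proposition (a conjunction of equalities between maps, hence $(-1)$-truncated in the topos), so that the second components agree automatically by proof irrelevance. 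The main point to be careful about is precisely this strictness: the identity $\phi\circ\psi=\mr{id}$ must hold for the strict equality used throughout the realignment argument, and this hinges only on the judgemental computation rule for $\unit$ and the propositional nature of $\mr{comp}$, rather than on any nontrivial property of $\yo_\oneR$ as a natural numbers algebra.
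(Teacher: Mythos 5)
Your proposal is correct and takes essentially the same approach as the paper: $\phi$ forgets the context by instantiating the parametrised recursor at the trivial context, and its section is built by ``absorbing $\Gamma$ into the context,'' i.e.\ applying the unparametrised recursor pointwise at each $\gamma:\Gamma$, exactly as in the paper's proof. Your explicit verification that $\phi\circ\psi=\mr{id}$ holds strictly (via the computation rule for $\unit$, function extensionality in $\ShR$, and the propositionality of $\mr{comp}$) merely fills in details the paper dismisses as trivial.
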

\begin{proof}
	Correctness of $\phi$ is trivial. To see that $\phi$ has a section, assume an elimination principle without context into $X$, together with terms $g:\Gamma\to X$ and $h:\Gamma\to\yo_\oneR\to X\to X$. Then we can absorb $\Gamma$ into the context; for every $\gamma:\Gamma$ we get a map $f^\gamma:N\to X$, i.e., an appropriate term of type $\Gamma\to\yo_\oneR\to X$.
\end{proof}

\subsection{Finite Types in the Universes}
We show that $\prvbar$ contains the sheafifications of the constant presheaves $\emptyobject$ and $\unit$. The section $s$ allows us to encode elements of $X$ as elements of $\yo_\oneR$,
while the retraction is the corresponding decoding.
The summand $\unit$ serves to allow $X$ to be empty.
\begin{lemma}
	$\emptyobject:\prvbar$.
\end{lemma}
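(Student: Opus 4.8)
The goal is to show $\emptyobject : \prvbar$, meaning the sheafification of the empty presheaf has the structure of an element of $\prvbar$. Recall that $\prvbar$ consists of types $X$ together with a retraction $r : \yo_\oneR \to X + \unit$ and a section $s : (X+\unit) \to \yo_\oneR$ satisfying $r \circ s = \mr{id}_{X+\unit}$. So I must exhibit $X := \emptyobject$ together with such maps.

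The plan is as follows. First I would observe that when $X = \emptyobject$, the coproduct $X + \unit = \emptyobject + \unit \cong \unit$, so the required data simplifies considerably: I need a retraction $r : \yo_\oneR \to \unit$ and a section $s : \unit \to \yo_\oneR$ with $r \circ s = \mr{id}_{\unit}$. The map $r$ is forced to be the unique map $!$ into the terminal object $\unit$. For the section $s : \unit \to \yo_\oneR$, I would use the point of $\yo_\oneR$ picking out the numeral $\synz$ (or indeed any chosen numeral), which exists because $\catR(\zeroR,\oneR) = \setN$ provides global elements of $\yo_\oneR$. The equation $r \circ s = \mr{id}_\unit$ then holds automatically by terminality of $\unit$.

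The one genuine point to verify is that the isomorphism $\emptyobject + \unit \cong \unit$ is valid in $\ShR$, i.e., that sheafification and the relevant (co)limits behave as expected, and that $\emptyobject$ really denotes the sheafification of the constant empty presheaf (the initial sheaf), which is strict initial in the topos $\ShR$. I would invoke that $\ShR$ is a Grothendieck topos, hence an extensive category in which $\emptyobject + \unit \cong \unit$ holds and $\emptyobject$ is strict initial; combined with the fact that $\yo_\oneR$ is inhabited by numerals, everything goes through.

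I expect the main obstacle to be mostly bookkeeping rather than mathematical difficulty: ensuring the maps $r$ and $s$ are genuine sheaf morphisms (which is automatic for $!$ and for the numeral-induced point) and checking the retraction equation strictly rather than merely up to isomorphism, as demanded by the realignment/strictness discipline of the STC setup flagged in \Cref{sec:stc}. Since $\unit$ is terminal, strictness of $r \circ s = \mr{id}_\unit$ is immediate, so even this concern dissolves. Thus the lemma reduces to recording these canonical choices.
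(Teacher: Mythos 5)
Your proposal is correct and follows essentially the same argument as the paper: the retraction $\yo_\oneR\to\emptyobject+\unit$ is the unique (forced) map $n\mapsto\inr\star$, and a section is obtained from any numeral, i.e., a global element of $\yo_\oneR$ coming from $\catR(\zeroR,\oneR)=\setN$ via Yoneda, with the retraction equation holding automatically since $\emptyobject+\unit\cong\unit$ is terminal. The only cosmetic difference is that you route the argument through the isomorphism $\emptyobject+\unit\cong\unit$ and mention extensivity, which is harmless but unnecessary, as initiality of $\emptyobject$ alone gives this isomorphism.
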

\begin{proof}
	There is exactly one map $r_\emptyobject:\yo_\oneR\to(\emptyobject+\unit)$ given by $n\mapsto\inr\star$. Any element of $\setN$ yields a section via the Yoneda embedding.
\end{proof}

\begin{lemma}\label{lem:sheaf2retract}
	 The sheafification $\bool^+$ of $\bool$ is a retract of $\yo_\oneR$ in $\ShR$. In other words, $\unit:\prvbar$.
\end{lemma}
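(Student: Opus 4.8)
The plan is to use the fact that the associated sheaf functor is left exact, hence preserves finite coproducts and the terminal object, to identify $\bool^+$ with the coproduct $\unit+\unit$ in $\ShR$. Since the constant presheaf $\bool$ is the coproduct of two copies of the terminal presheaf, sheafifying gives $\bool^+\cong\unit+\unit$. Under this identification the assertion that $\bool^+$ is a retract of $\yo_\oneR$ is exactly the data required to witness $\unit:\prvbar$ in the sense of \Cref{def:pruniverses}: a section $s:\unit+\unit\to\yo_\oneR$ and a retraction $r:\yo_\oneR\to\unit+\unit$ with $r\circ s=\mr{id}$.

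For the section I would use that $\unit\cong\yo_\zeroR$, as $\zeroR$ is terminal and $\jfin$ is subcanonical, so that global points of $\yo_\oneR$ are computed by Yoneda as $\ShR(\unit,\yo_\oneR)\cong\catR(\zeroR,\oneR)=\setN$. Choosing the two points named by the numerals $0$ and $1$ and copairing them out of the coproduct yields $s$. For the retraction I would use the sign function $\sgn:\setN\to\setN$ from the decidability lemma above, which is primitive recursive, takes values in $\{0,1\}$, and satisfies $\sgn\circ\sgn=\sgn$ with $\sgn(0)=0$ and $\sgn(1)=1$. Its fibres over $0$ and $1$ are complementary decidable subobjects $U_0,U_1\subseteq\yo_\oneR$ (decidability of $\{\,\cdot=0\,\}$ and its complement is precisely the content of that lemma), and these partition $\yo_\oneR$, hence assemble into a map $r:\yo_\oneR\to\unit+\unit$ that classifies a generalized element according to whether its sign is $0$ or $1$. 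Concretely, at the object $\oneR$ this sends a primitive recursive $f:\setN\to\setN$ to the decidable two-valued function $\sgn\circ f$.

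I would then verify the retract equation $r\circ s=\mr{id}$, which holds because $\sgn$ fixes both $0$ and $1$: the point $0$ lands in $U_0$, the first summand, and the point $1$ lands in $U_1$, the second summand. Naturality of $r$ and $s$ in the two kinds of morphisms of $\catR$ is a routine check, using that restriction along a point $m:\zeroR\to\oneR$ is evaluation at $m$ and restriction along $g:\oneR\to\oneR$ is precomposition with $g$, both of which commute with postcomposition by $\sgn$.

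The \textbf{main obstacle} is justifying that the pair $(U_0,U_1)$ is a genuine coproduct decomposition $\yo_\oneR\cong U_0+U_1$ in the \emph{sheaf} topos, and not merely in presheaves, so that $r$ is well defined as a map into the coproduct sheaf $\unit+\unit$. This is exactly where the finite cover topology of \Cref{lem:jfinbasis} is needed: the two fibres of $\sgn$ are covered by the finite jointly surjective family $\{\,0:\zeroR\to\oneR,\ \syns:\oneR\to\oneR\,\}$ consisting of the point $0$ together with the successor map, so the decomposition is a $\jfin$-local coproduct and survives sheafification. Equivalently, one computes that $\bool^+(\oneR)$ is the set of decidable subsets of $\setN$ and checks that $r_\oneR(f)=\sgn\circ f$ indeed lands in it; once this identification is in place, the remaining verifications are immediate.
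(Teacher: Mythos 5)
Your proof is correct, but it takes a genuinely different route from the paper's. The section is essentially the same in both (the copairing of the two global points $0,1\in\catR(\zeroR,\oneR)$; the paper names them the constant functions $c_0,c_1$), but your retraction splits $\yo_\oneR$ along the cover $\{0:\zeroR\to\oneR,\ \syns:\oneR\to\oneR\}$ into zero versus successors, whereas the paper splits along the even/odd cover $\{\_\cdot 2,\ \_\cdot 2+1:\oneR\to\oneR\}$. More substantially, the styles of justification differ: the paper is concrete, noting that $\bool$ is separated (so one application of the plus-construction computes $\bool^+$), describing elements of $\bool^+(\oneR)$ as equivalence classes of matching families determined by finite, disjoint, jointly surjective families, and verifying the retraction equations from the fact that the pullback of any $\jfin$-sieve along a constant endomap is the maximal sieve. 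You instead argue abstractly: $\bool^+\cong\unit+\unit$ in $\ShR$, the disjoint subobjects $U_0=\mathrm{im}(0)$ and $U_1=\mathrm{im}(\syns)$ of $\yo_\oneR$ become jointly covering after sheafification, hence exhibit $\yo_\oneR$ as their coproduct in $\ShR$, and $r$ is the evident copairing; the retract identity then reduces to the points $0$ and $1$ factoring through $U_0$ and $U_1$ respectively. Your argument is shorter and isolates exactly where the topology enters (in $\PSh\catR$ this decomposition fails, cf.~\Cref{rem:notpresheaves}), while the paper's computation buys an explicit description of $\bool^+$ and its restriction maps. Two minor corrections: sheafification preserves finite coproducts because it is a left adjoint, not because it is left exact (left exactness accounts only for the terminal object); and $\bool^+(\oneR)$ consists of those subsets $S\subseteq\setN$ such that $S$ and its complement are each empty or the image of a primitive recursive function, i.e.\ the recursive sets rather than the primitive recursively decidable ones---though your element $\sgn\circ f$ plainly lands there, so this imprecision does not affect the argument.
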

\begin{proof}
	We first remark that $\bool$ is separated, because no object of the site $(\catR,\jfin)$ is covered by the empty family. Therefore, the $+$-construction only needs to be applied once.
	
	An element of $\bool^+(\oneR)$ is an equivalence class of matching families
	\[\{x_f\in\bool\mid f:\oneR\to\oneR\in P\}\]
	for some cover $P\in \jfin(\oneR)$ satisfying
	\[x_{f\circ g}=x_{f}\]
	for all $g:\oneR^l\to\oneR$. Here two such matching families $\{x_f\mid f\in P\}$ and $\{y_g\mid g\in Q\}$
	are equivalent when there is a common refinement $T\subset P\cap Q$ with $T\in \jfin(\oneR)$ such that $x_f=y_f$ for all $f\in T$.
	By a similar construction as the one used to obtain the disjoint $S_i$ in the proof of~\Cref{lem:jfinbasis}, each matching family is completely determined by finitely many
	\[\{x_{g_j}\}_{j=1}^m\]
	where $\{g_j\}$ is a finite, jointly surjective, \emph{disjoint} family of morphisms with domain and codomain $\oneR$. In fact, we can collect all of the $g_j$ where $x_{g_j}=0$, and similarly for $x_{g_j}=1$ to obtain an equivalent matching family $\{0,1\}$ on two morphisms.\footnote{That is true if not all sections are $0$, or $1$. Otherwise, we get a singleton matching family $\{0\}$ or $\{1\}$.}
	
	The action of $\bool^+$ on a morphism $\varphi:\oneR\to\oneR$ restricts a matching family $\{x_f\}$ to its pullback along $\varphi$. This restriction does not change the value of any individual $x_f$.
	
	By the Yoneda lemma,
	\[\ShR(\yo_\oneR,\bool^+)\cong\PSh\catR(\yo_\oneR,\bool^+)\cong\bool^+(\zeroR).\]
	Under above isomorphism, an element $u\in\bool^+(\zeroR)$ is sent to the natural transformation, here denoted $r_u$, which at component $m\in\{0,1\}$ is given by
	\begin{align*}
		r_u(m):(\oneR^m\to\oneR)&\to \bool^+(m)\\
		\varphi&\mapsto\bool^+(\varphi)(u).
	\end{align*}
	We need to find an equivalence class of matching families $u$, and a natural transformation $s$ with components
	\[s_m:\bool^+(m)\to(\oneR^m\to\oneR)\]
	such that for all $x\in\bool^+(m)$,
	\begin{equation*}
		\bool^+(s_m(x))(u)=x,
	\end{equation*}
	natural in $m$.
	Because sheafification is a reflective localization, $s$ is completely determined by a map $\Tilde{s}:\bool\to\yo_\oneR$, and $\Tilde{s}=s\circ\eta_{\bool}$. By the universal property of $\bool=\unit+_{\PSh\catR}\unit$, $s$ is determined by two global sections of $\yo_\oneR$. Let us choose the constant functions $c_0$ and $c_1$.

	For our retraction we use the cover of $\oneR$ generated by
	$\_\cdot 2, \_\cdot 2+1:\oneR\to\oneR$ and the matching family $u$ with value $\mr a\in\bool$ on $\_\cdot 2$ and $\mr b$ on the other map.
	If we denote the two global sections of $\bool$ by $\mbf a$ and $\mbf b$, we need to show
	\begin{align*}\label{eq:twosheafa}
		\bool^+(c_0)(u)=\mbf a\quad\quad\text{and}\quad\quad \bool^+(c_1)(u)=\mbf b.
	\end{align*}
	It is easy to see that the pullback $c_n^*(S)$ of any $\jfin$-sieve $S$ on $\oneR$ along a constant function $c_n:\oneR\to\oneR$ is the maximal sieve on $\oneR$. Since the restriction maps $\bool^+(f)$ are locally (on each component of the equivalence class of) the matching family given by identity functions, it follows that the desired equations hold.
\end{proof}

\begin{remark}\label{rem:notpresheaves}
	The reason we work in the topos $\ShR$ rather than $\PSh\catR$ is that $\bool$ is not a retract of $\yo_\oneR$ in $\PSh\catR$, because retracts of representables in presheaf categories are tiny, but the endofunctor $X\mapsto X\times X$ on $\PSh\catR$ does not preserve colimits.
\end{remark}

\subsection{\texorpdfstring{$\yo_\oneR$}{yN}-elimination}
In this subsection we show that $\yo_\oneR$-retracts satisfy the elimination principle $\synind$ in several steps.

\begin{lemma}\label{thm:pruyo1}
	The representable sheaf $\yo_\oneR$ is in $\prv$.
\end{lemma}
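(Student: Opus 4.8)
The plan is to observe that $\yo_\oneR$ is $\justV_0$-small (as $\jfin$ is subcanonical and $\justV_0$ is chosen to contain the representables), so it supplies the first component of $\prv$, and then to equip it with the recursion structure: internally, a term assigning to every $g:\yo_\oneR$ and $h:\yo_\oneR\to\yo_\oneR\to\yo_\oneR$ a map $f:\yo_\oneR\to\yo_\oneR$ together with a proof of $\mr{comp}(f,g,h)$. The essential point is that this structure is nothing but the external primitive recursion operator transported through the Yoneda lemma. I would first construct the family at representable stages and then check naturality.

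First, fix a representable stage $\oneR^k$. Unwinding via Yoneda, an element $g\in\yo_\oneR(\oneR^k)=\catR(\oneR^k,\oneR)$ is a primitive recursive function $\setN^k\to\setN$. For $h$, I would use that $\yo$ preserves the finite products of \Cref{lem:rfinprod}, so $\yo_{\oneR^k}\times\yo_\oneR\times\yo_\oneR\cong\yo_{\oneR^{k+2}}$; by subcanonicity together with the exponential adjunction, an element of $(\yo_\oneR\Rightarrow\yo_\oneR\Rightarrow\yo_\oneR)(\oneR^k)$ is a morphism $\yo_{\oneR^{k+2}}\to\yo_\oneR$, i.e. a primitive recursive $\setN^{k+2}\to\setN$. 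I would then set $f:=\mr{primrec}^k_{g,h}$, which is primitive recursive by closure of the primitive recursive functions under the recursion operator, hence a morphism $\yo_{\oneR^{k+1}}\to\yo_\oneR$ transposing to the required $f\in(\yo_\oneR\Rightarrow\yo_\oneR)(\oneR^k)$. The two equations packaged in $\mr{comp}(f,g,h)$, namely $f(\synz)=g$ and $f(\syns n)=h(n,fn)$, are precisely the two defining clauses of $\mr{primrec}^k$.

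The remaining, and principal, obstacle is naturality: this stage-indexed family must be stable under reindexing along an arbitrary $\sigma:\oneR^l\to\oneR^k$ in $\catR$ in order to assemble into a genuine internal term. Restriction along $\sigma$ sends $g$ to $g\circ\sigma$ and $h$ to $h\circ(\sigma\times\mr{id})$, so what must be verified is the identity $\mr{primrec}^k_{g,h}\circ(\sigma\times\mr{id}_\oneR)=\mr{primrec}^l_{g\circ\sigma,\,h\circ(\sigma\times\mr{id})}$. This holds because the recursion acts only on the distinguished argument while the parameters are carried along unchanged, and it is exactly the statement that primitive recursion commutes with substitution in its parameters. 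Since the representables form a dense generating family, this naturality suffices to pin down the internal term. Alternatively, one may carry out the same construction with an explicit context $\Gamma$ to exhibit $\yo_\oneR\in\cprv$ first — where the parameters are absorbed into $\Gamma$ and naturality is transparent — and then apply the retraction $\phi$ of \Cref{lem:praddcontext} to land in $\prv$.
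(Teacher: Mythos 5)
Your proposal is correct and follows essentially the same route as the paper's proof: both compute the relevant exponentials at representable stages of the Lawvere-theory presentation $\catR'$ via Yoneda and subcanonicity, so that the internal recursion structure on $\yo_\oneR$ reduces to the external operator $\mr{primrec}^k$, whose defining clauses give the $\mr{comp}$-equations. The only difference is presentational: the paper uncurries everything into a single natural transformation $\yo_\oneR\times\yo_\oneR^{++}\times\yo_\oneR\to\yo_\oneR$ with component $(g,h,n)\mapsto\lambdaop x.\mr{primrec}^l_{g,h}(n(x),x)$ and leaves naturality as ``easy to verify,'' whereas you curry the data and isolate exactly that naturality condition --- stability of $\mr{primrec}$ under substitution in the parameters --- as the key check.
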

\begin{proof}
	We show the result for the equivalent category $\catR'$ from \Cref{rem:equivr}.
	Note that for any $F:\PSh\catR$ and $l\in\setN$,
	\[(\yo_\oneR\Rightarrow F)(\oneR^l)\cong\PSh\catR(\yo_\oneR\times\yo_{\oneR^l},F)\cong F(\oneR^{l+1}).\]
	We define the presheaf $F^+$ by $F^+(\oneR^l):=F(\oneR^{l+1})$. It follows that
	\begin{align*}
		&\PSh\catR(\yo_\zeroR,\yo_\oneR\to(\yo_\oneR\to\yo_\oneR\to\yo_\oneR)\to\yo_\oneR\to\yo_\oneR)\\
		\cong&\PSh\catR(\yo_\oneR\times\yo_\oneR^{++}\times\yo_\oneR,\yo_\oneR)
	\end{align*}
	We can define such a natural transformation which at component $l$ is of type
	\begin{align*}
		(\oneR^l\to \oneR)\times(\oneR^{l+2}\to\oneR)\times(\oneR^l\to\oneR)\to(\oneR^l\to\oneR)
	\end{align*}
	and given by
	\[(g,h,n)\mapsto \lambdaop x.\mr{primrec}_{g,h}^l(n(x),x).\]
	The computation rules are easy to verify.
\end{proof}

\begin{lemma}\label{lem:vcindbool}
	The sheaf $\bool^+$ satisfies the contextual elimination principle, i.e., $\bool^+:\cprv$.
\end{lemma}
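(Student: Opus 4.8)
The plan is to reduce contextual recursion into $\bool^+$ to contextual recursion into $\yo_\oneR$, exploiting that $\bool^+$ is a retract of $\yo_\oneR$. By~\Cref{lem:sheaf2retract} we have a section $s:\bool^+\to\yo_\oneR$ and a retraction $r:\yo_\oneR\to\bool^+$ with $r\circ s=\mr{id}_{\bool^+}$. By~\Cref{thm:pruyo1} we have $\yo_\oneR:\prv$, and applying the section of the retraction $\phi:\cprv\to\prv$ from~\Cref{lem:praddcontext} upgrades this to $\yo_\oneR:\cprv$ with base type $\yo_\oneR$; thus $\yo_\oneR$ admits \emph{contextual} recursion, which is the engine of the argument.

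Fix a context $\Gamma:\justV_0$ together with recursion data $g:\Gamma\to\bool^+$ and $h:\Gamma\to\yo_\oneR\to\bool^+\to\bool^+$. I would encode this data into $\yo_\oneR$ by setting $\tilde g:=s\circ g:\Gamma\to\yo_\oneR$ and $\tilde h(\gamma,n,y):=s\bigl(h(\gamma,n,r(y))\bigr)$, giving $\tilde h:\Gamma\to\yo_\oneR\to\yo_\oneR\to\yo_\oneR$. Applying contextual $\yo_\oneR$-recursion to $(\Gamma,\tilde g,\tilde h)$ yields $\tilde f:\Gamma\to\yo_\oneR\to\yo_\oneR$ satisfying $\tilde f^\gamma(\synz)=\tilde g^\gamma$ and $\tilde f^\gamma(\syns n)=\tilde h^\gamma(n,\tilde f^\gamma(n))$. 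I then define the candidate recursor $f:=r\circ\tilde f$, that is $f^\gamma(n):=r(\tilde f^\gamma(n))$.

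It remains to verify the two clauses of $\mr{comp}(f^\gamma,g^\gamma,h^\gamma)$. For the base case, $f^\gamma(\synz)=r(\tilde f^\gamma(\synz))=r(\tilde g^\gamma)=r(s(g^\gamma))=g^\gamma$, using $r\circ s=\mr{id}$. For the step case, $f^\gamma(\syns n)=r(\tilde f^\gamma(\syns n))=r(\tilde h^\gamma(n,\tilde f^\gamma(n)))=r\bigl(s(h^\gamma(n,r(\tilde f^\gamma(n))))\bigr)=h^\gamma(n,r(\tilde f^\gamma(n)))=h^\gamma(n,f^\gamma(n))$, again using $r\circ s=\mr{id}$ and the definition of $f$. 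Since every map in sight is a genuine morphism of $\ShR$ obtained by composition, it is automatically natural, so this exhibits the required element of $\cprv$ with base type $\bool^+$.

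The main obstacle I anticipate is ensuring the computation rules hold up to \emph{strict} equality rather than mere isomorphism, as demanded by $\mr{comp}$. This hinges entirely on $r\circ s=\mr{id}_{\bool^+}$ being a strict identity in $\ShR$, as supplied by the retract structure of~\Cref{lem:sheaf2retract}, together with the strict computation rules transported from $\yo_\oneR$ via~\Cref{thm:pruyo1} and~\Cref{lem:praddcontext}. A secondary point to check is that $\tilde h$ is well defined as an internal map, which is immediate since $r$, $s$, and $h$ are all morphisms of the topos.
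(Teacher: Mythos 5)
Your proposal is correct and takes essentially the same route as the paper: both arguments conjugate the recursion data through the retraction $s,r$ of \Cref{lem:sheaf2retract}, run $\yo_\oneR$-induction from \Cref{thm:pruyo1} on the encoded data $\tilde g = s(g)$, $\tilde h(n,x)=s(h(n,r(x)))$, and decode via $f:=r\circ\tilde f$, with the verification resting on $r\circ s=\mathrm{id}_{\bool^+}$. The only cosmetic difference is where \Cref{lem:praddcontext} enters: the paper uses its section to reduce the goal to the context-free statement $\bool^+:\prv$ and runs the retract argument without $\Gamma$, whereas you use the same section to upgrade $\yo_\oneR:\prv$ to $\yo_\oneR:\cprv$ and carry $\Gamma$ through the computation — the two are interchangeable.
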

\begin{proof}
	By \Cref{lem:praddcontext} it suffices to show that $\bool^+:\prv$. Let $(\bool^+,r,s,p)$ be a retraction as constructed in \Cref{lem:sheaf2retract}, $g:\bool^+$, and $h:\yo_\oneR\to\bool^+\to\bool^+$.
	We define
	\[\Tilde g:=s(g):\yo_\oneR\]
	and
	\begin{align*}
		\Tilde h:\yo_\oneR\to\yo_\oneR&\to\yo_\oneR\\
		\Tilde h(n,x)&:=sh(n,rx).
	\end{align*}
	By $\yo_\oneR$-induction (c.f. \Cref{thm:pruyo1}) we get
	\begin{align*}
		\Tilde f:\yo_\oneR&\to\yo_\oneR\\
		\Tilde f(0)&=s(g)\\
		\Tilde f(\syns (n))&=sh(n,r\Tilde fx).
	\end{align*}
	We define the desired morphism $f:=r\circ\Tilde f$.
	Then
	\begin{align*}
		f(0)=rsg=g
	\end{align*}
	and
	\begin{align*}
		f(\syns n)=rsh(n,r\Tilde fn)=h(n,fn)
	\end{align*}
	for all $n:\yo_\oneR$.
\end{proof}
\begin{remark}
	The construction of \Cref{lem:vcindbool} works for any type $X$ with retraction $\yo_\oneR\to X$. The difficult part is proving that a retraction $\yo_\oneR\to X+\unit$ gives $X:\prv$, and contextual elimination into $\bool^+$ suffices to prove that, see below.
\end{remark}

\begin{lemma}\label{lem:predsec}
	The predecessor function $\pred$ is a section of the successor function viewed as
	\[\syns:\yo_\oneR\to(n:\yo_\oneR)\times (n\neq 0).\]
	This is true in $\PSh\catR$ and $\ShR$.
\end{lemma}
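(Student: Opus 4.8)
The plan is to identify the codomain $(n:\yo_\oneR)\times(n\neq 0)$ as an explicit subobject of $\yo_\oneR$ and then to verify the section equation by a single primitive-recursive computation that is visibly insensitive to whether we work in $\PSh\catR$ or $\ShR$. Throughout I would use the equivalent site $\catR'$ of \Cref{rem:equivr}, so that a section of $\yo_\oneR$ over $\oneR^l$ is a primitive recursive function $f:\setN^l\to\setN$, and $\syns$, $\pred$ act by postcomposition with the set-level successor $\mr{succ}$ and predecessor $\mr{pred}$.

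First I would pin down the subobject. Since $n\neq 0$ abbreviates $(n=0)\to\emptyobject$, it is a mere proposition, so $(n:\yo_\oneR)\times(n\neq 0)$ is the total space of a subobject $U\hookrightarrow\yo_\oneR$. The locus $[n=0]$ is the image of the zero section $\unit\to\yo_\oneR$; over $\oneR^l$ it is the singleton on the constant-zero function, hence isomorphic to $\unit$, and it is a subsheaf because joint surjectivity of a $\jfin$-cover forces any amalgamation of constant-zero restrictions to be constant zero. Computing its Heyting negation in $\PSh\catR$ and testing against constant reindexings $\oneR^m\to\oneR^l$, one finds that over $\oneR^l$ the subobject $U$ consists exactly of the \emph{nowhere-zero} primitive recursive functions $\setN^l\to\setN$. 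The key point for the last clause of the statement is that this nowhere-zero subpresheaf is already a subsheaf: along any $\jfin$-cover the amalgamation of nowhere-zero sections is again nowhere-zero, again by joint surjectivity. Hence $\neg[n=0]$ is computed identically in $\PSh\catR$ and $\ShR$, and $U$ is the same subobject in both.

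Next I would check that $\syns$ really corestricts to $U$, i.e. that successors are internally nonzero: the equalizer $[\syns(k)=0]$ is the empty presheaf, since $\mr{succ}\circ f=f+1$ is never constantly zero, and the empty presheaf is a sheaf here because no object is covered by the empty family. With this, $\syns$ factors as $\yo_\oneR\to U$, and I take the candidate section to be $\pred$ precomposed with the inclusion $\iota:U\hookrightarrow\yo_\oneR$, that is $(n,p)\mapsto\pred(n)$. To verify $\syns\circ\pred=\mathrm{id}_U$, proof-irrelevance of $n\neq 0$ reduces matters to the first components, so it suffices to show that the two maps $U\to\yo_\oneR$ given by $\iota$ and by $\yo_{\mr{succ}\circ\mr{pred}}\circ\iota$ coincide. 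Since $\mr{succ}(\mr{pred}(m))=m$ precisely when $m\neq 0$, the equalizer $[\,\mr{succ}\circ\mr{pred}=\mathrm{id}\,]$ is exactly the nowhere-zero subobject $U$, so the two maps agree on $U$, as required.

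The main obstacle is exactly the treatment of the negation $n\neq 0$: it is a Heyting negation, so a priori it could be computed differently in the presheaf and sheaf toposes, and one cannot shortcut the argument by the classical case split ``$n=0$ or $n\neq 0$'', since this disjunction genuinely fails internally (the identity function $\setN\to\setN$ is neither constantly zero nor nowhere zero, so pointwise it lies in neither disjunct). The crux is therefore the closure check showing the nowhere-zero subpresheaf is stable under amalgamation along $\jfin$-covers, which is what lets a single computation settle both toposes. I would also flag that the reverse identity $\pred\circ\syns=\mathrm{id}_{\yo_\oneR}$ (coming from $\mr{pred}\circ\mr{succ}=\mathrm{id}$) is the easy direction and merely makes $\pred$ a retraction of $\syns$; the actual content here is the section identity restricted to the nonzero part.
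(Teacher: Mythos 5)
Your proof is correct, and its computational core is the same as the paper's: internally nonzero elements of $\yo_\oneR$ are exactly the nowhere-zero primitive recursive functions, on which $\mr{succ}\circ\mr{pred}$ acts as the identity. The packaging, however, genuinely differs. The paper argues with generalized elements: given $n:Z\to\yo_\oneR$ with $n\neq 0$, it reduces to representable $Z$ and concludes by the Yoneda lemma, then transfers to $\ShR$ in one line by applying sheafification and subcanonicity of $\jfin$. You instead work externally with subobjects: you compute $U=\neg[n=0]$ explicitly as the nowhere-zero sections, observe that $\syns$ corestricts to $U$, and identify $U$ with the equalizer of $\yo_{\mr{succ}\circ\mr{pred}}$ and the identity, which makes the section identity $\syns\circ\pred=\mathrm{id}_U$ tautological. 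The real divergence is in the sheaf case: where the paper invokes sheafification, you prove that $[n=0]$ and the nowhere-zero subpresheaf are already subsheaves (via joint surjectivity of $\jfin$-covers and the absence of empty covers), so the interpretation of $(n:\yo_\oneR)\times(n\neq 0)$ is literally the same subobject in $\PSh\catR$ and in $\ShR$. This is more careful than the paper on a point it elides: a left exact functor such as sheafification need not preserve Heyting negation, so the paper's one-line transfer to $\ShR$ implicitly requires exactly your observation that the negation is computed identically in both toposes. What the paper's route buys is brevity and no need to describe $U$ explicitly; what yours buys is an explicit description of the subobject and a transfer argument between the two toposes that is airtight as stated.
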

\begin{proof}
	We prove the result for $\PSh\catR$ first. We cannot yet use induction on $\yo_\oneR$. Let $n:Z\to\yo_\oneR$ such that $n\neq 0$. We show that $\syns(\pred n)=n$ so that function extensionality yields the result. We may assume that $Z$ is representable (cf. \cite[III.6 and VI.7]{MacLane1994}). Then the result follows immediately from the Yoneda lemma.

	Because the $\jfin$ is subcanonical, application of sheafification to above identity implies the result for $\ShR$.
\end{proof}

\begin{lemma}
	$\yo_\oneR:\prvbar$
\end{lemma}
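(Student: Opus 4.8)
The plan is to produce the tuple $(\yo_\oneR, r, s, p)$ witnessing $\yo_\oneR:\prvbar$ by decomposing $\yo_\oneR$ as the coproduct of its zero element and its nonzero elements, the latter being another copy of $\yo_\oneR$. The section $s:\yo_\oneR+\unit\to\yo_\oneR$ is the easy half: I would take the copairing $s:=[\syns,\synz]$, sending $\inl(n)\mapsto\syns(n)$ and $\inr(\star)\mapsto\synz$, using only the natural-numbers-algebra structure already available on $\yo_\oneR$ together with the universal property of the coproduct.

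The retraction $r:\yo_\oneR\to\yo_\oneR+\unit$ is where the work lies, since defining a map \emph{into} a coproduct requires splitting the domain into complementary pieces. First I would apply the decidability result for subsets $\{f=g\}$ with $f=\mr{id}_\setN$ and $g$ the constant $\synz$, obtaining that $\{n:\yo_\oneR\mid n=\synz\}$ is a complemented subobject of $\yo_\oneR$; this gives a genuine coproduct decomposition $\yo_\oneR\cong\{n=\synz\}+\{n\neq\synz\}$ in $\ShR$, with the first summand isomorphic to $\unit$. For the second summand I would invoke \Cref{lem:predsec}: it supplies $\pred$ as a section of $\syns:\yo_\oneR\to(n:\yo_\oneR)\times(n\neq 0)$, and since $\syns$ is monic (successor is injective on $\setN$, and both the Yoneda embedding and the left-exact sheafification preserve monos), this section is in fact a two-sided inverse, so $\{n\neq\synz\}\cong\yo_\oneR$ via $\pred$. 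Transporting along these identifications and the commutativity of $+$, I define $r$ to send $\synz\mapsto\inr(\star)$ and a nonzero $n\mapsto\inl(\pred(n))$.

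It then remains to check $r\circ s=\mr{id}_{\yo_\oneR+\unit}$ by cases. On $\inr(\star)$ we have $s(\inr(\star))=\synz$, which lies in the zero summand, so $r(\synz)=\inr(\star)$. On $\inl(n)$ we have $s(\inl(n))=\syns(n)$; this lies in the nonzero summand because $\syns(n)\neq\synz$ (the equation $n+1=0$ has no solutions, so the relevant subobject is empty), whence $r(\syns(n))=\inl(\pred(\syns(n)))=\inl(n)$ using $\pred\circ\syns=\mr{id}$. This yields the retraction and therefore $\yo_\oneR:\prvbar$.

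The main obstacle I anticipate is exactly the decidable complementation of the zero subobject: a total predecessor alone does not let one define a map into $\yo_\oneR+\unit$, because the coproduct is disjoint and one must decide constructively whether each element is zero. It is here that the decidability lemma is indispensable, and indeed the reason the argument lives in $\ShR$ rather than in a setting where this subobject might fail to be complemented. Once the decomposition is secured, the copairing $s$, the identification of the nonzero part via \Cref{lem:predsec}, and the case-wise verification are all routine.
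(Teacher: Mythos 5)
Your proposal is essentially the paper's own proof: the same section $s = [\syns, \synz]$, the same retraction $r$ defined by the case split on $n=\synz$ (sending nonzero $n$ to $\pred n$), and the same case-wise verification of $r\circ s=\mr{id}_{\yo_\oneR+\unit}$. Your two extra justifications — that the zero subobject is complemented in $\ShR$ so the case distinction is legitimate, and that $\pred\circ\syns=\mr{id}$ follows from \Cref{lem:predsec} plus monicity of $\syns$ — merely spell out what the paper compresses into ``by case distinction'' and ``no induction needed,'' and both are sound.
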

\begin{proof}
	We inductively define
	\begin{align*}
		s:\yo_\oneR+\unit&\to\yo_\oneR\\
		n&\mapsto\syns n\\
		\star&\mapsto 0.
	\end{align*} 
	By case distinction we get a map
	\begin{align*}
		r:\yo_\oneR&\to\yo_\oneR+\unit\\
		n &\mapsto\begin{cases}
			\star,& (n=0)\\
			\pred n,& (n\neq 0).
		\end{cases}		
	\end{align*}
	There is no induction needed to verify that $s$ is a section of $r$.
\end{proof}

\begin{lemma}\label{lem:deceq}
	$\yo_\oneR$ has decidable equality in $\ShR$.
\end{lemma}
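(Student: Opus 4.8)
The plan is to exhibit the decision procedure as the sheafification of the primitive recursive equality test, and to read off decidability from the explicit retraction onto $\bool^+$ built in \Cref{lem:sheaf2retract}. Recall that equality of natural numbers is primitive recursive: the function $\mr{eq}:\setN^2\to\setN$ sending $(m,n)$ to $1$ when $m=n$ and to $0$ otherwise is assembled from $\sgn$ and truncated subtraction, exactly as in the decidability lemma for the sets $\{f=g\}$. Viewed through $\oneR\times\oneR\cong\oneR^2$ (\Cref{lem:rfinprod} and \Cref{rem:equivr}), this is a morphism $\mr{eq}:\oneR\times\oneR\to\oneR$ of $\catR$, and since $\jfin$ is subcanonical the Yoneda embedding turns it into a morphism $\mr{eq}:\yo_\oneR\times\yo_\oneR\to\yo_\oneR$ in $\ShR$ that factors through the two global sections $c_0,c_1$.

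First I would postcompose with the retraction $r:\yo_\oneR\to\bool^+$ from \Cref{lem:sheaf2retract}, setting $\delta:=r\circ\mr{eq}:\yo_\oneR\times\yo_\oneR\to\bool^+$. Since sheafification preserves colimits and $\unit$ is already a sheaf, $\bool^+\cong\unit+\unit$ is the Boolean object of $\ShR$; I write $\mbf a:=\inl\star$ and $\mbf b:=\inr\star$ for its two global points, which are complemented and exhaust the global sections because covers of $\zeroR$ are trivial and hence $\bool^+(\zeroR)=\bool(\zeroR)=\{0,1\}$. The matching family $u$ used to construct $r$ takes value $\mbf a$ on the even branch $\_\cdot 2$ and $\mbf b$ on the odd branch $\_\cdot 2+1$, so $r$ sends a section $h:\oneR^l\to\oneR$ to the class of $h^*u$, which is the constant $\mbf b$ exactly when $h$ is everywhere odd.

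The heart of the argument is then to identify the subobject $\delta^{-1}(\mbf b)\hookrightarrow\yo_\oneR\times\yo_\oneR$ with the diagonal $\Delta$, which I would verify on representables since every object is covered by them. A generalized element at stage $\oneR^l$ is a pair $(f,g)$ of primitive recursive functions $\setN^l\to\setN$, and $\delta(f,g)=r\bigl(\mr{eq}\circ\langle f,g\rangle\bigr)$, where $\mr{eq}\circ\langle f,g\rangle$ is the $\{0,1\}$-valued function taking $x$ to $1$ if $f(x)=g(x)$ and to $0$ otherwise. By the description of $r$ this class equals the constant $\mbf b$ precisely when $\mr{eq}\circ\langle f,g\rangle$ is everywhere odd, i.e.\ everywhere $1$, i.e.\ $f(x)=g(x)$ for all $x$, i.e.\ $f=g$; and $(f,g)$ factors through $\Delta$ under the very same condition. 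Hence $\delta^{-1}(\mbf b)=\Delta$, so the diagonal of $\yo_\oneR$ is a complemented subobject of $\yo_\oneR\times\yo_\oneR$ with complement $\delta^{-1}(\mbf a)$, which is exactly decidable equality.

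I expect the main obstacle to be the local subtlety in the final step: one must check that a value function $\mr{eq}\circ\langle f,g\rangle$ which is not everywhere $1$ genuinely produces an image under $r$ that differs from the global section $\mbf b$, rather than differing only locally. This is settled by observing that if $\{x:f(x)\neq g(x)\}$ is non-empty one may precompose with a point $\zeroR\to\oneR^l$ landing in it (equivalently, restrict along the even branch of the induced cover) to witness the value $\mbf a$, so the two matching families are inequivalent in $\bool^+(\oneR^l)$. The remaining verifications---that $\mr{eq}$ is a well-defined arrow of $\catR$ and that $r$ separates $c_0$ and $c_1$---are routine.
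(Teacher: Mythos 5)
You prove the lemma correctly, but by a genuinely different route than the paper. You argue externally, at the level of the site: you push the primitive recursive equality test through Yoneda to obtain $\mr{eq}:\yo_\oneR\times\yo_\oneR\to\yo_\oneR$, compose with the retraction $r:\yo_\oneR\to\bool^+$ of \Cref{lem:sheaf2retract}, and check stage by stage---using the matching-family description of $\bool^+$ and the fact that every covering sieve contains all points, by joint surjectivity---that $\delta^{-1}(\mbf b)$ is precisely the diagonal, so the diagonal is a complemented subobject. The paper instead stays internal: it reduces to deciding $n=_{\yo_\oneR}0$, uses the already-established elimination principle for $\bool^+$ (\Cref{lem:vcindbool}, which rests on \Cref{thm:pruyo1}) to define a zero-test $\piop:\yo_\oneR\to\bool$ together with a map $\varphi_n:\bool\to\yo_\oneR$ satisfying $\varphi_n(\piop n)=n$, and then decides $n=0$ from decidability of equality in $\bool$. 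The paper's proof is shorter given the preceding lemmas and never re-opens the sheaf-theoretic computations; your proof re-engages with matching families, but in exchange it produces the full binary decision procedure explicitly in one step (the paper's reduction of $m=n$ to $n=0$, e.g.\ via truncated subtraction defined by induction, is left implicit), and it directly exhibits the complemented-diagonal formulation that \Cref{lem:prvbaridclosure} subsequently uses.
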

\begin{proof}
	It suffices to decide $n=_{\yo_\oneR}0$.
	We inductively define (cf. \Cref{lem:vcindbool})
	\begin{align*}
		\piop:\yo_\oneR&\to\bool\\
		0&\mapsto 0\\
		\syns  n&\mapsto 1
	\end{align*}
	and
	\begin{align*}
		\varphi:\yo_\oneR\to\bool&\to\yo_\oneR\\
		\varphi_n(0)&:=0\\
		\varphi_n(1)&:=n.
	\end{align*}
	Decidability of $\piop(n)=_\bool0$ and the identities
	\begin{align*}
		\varphi_n(\piop 0)=0\quad\quad\text{and}\quad\quad\varphi_n(\piop n)=n
	\end{align*}
	can be used to decide $n=_{\yo_\oneR}0$.
\end{proof}

\begin{theorem}\label{thm:prvbartoprv}
	There is a map
	\[\Phi:\prvbar\to\prv\]
	preserving underlying types.
\end{theorem}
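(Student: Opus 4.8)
The plan is to define $\Phi$ so that it leaves the carrier $X$ untouched and only manufactures the eliminator witnessing $X:\prv$ out of the retraction data; this makes ``preserving underlying types'' immediate. So fix $(X,r,s,p):\prvbar$ with $r:\yo_\oneR\to X+\unit$, $s:(X+\unit)\to\yo_\oneR$ and $p:r\circ s=\mr{id}_{X+\unit}$. Unfolding $\prv$, I must produce, for every $g:X$ and $h:\yo_\oneR\to X\to X$, a function $f:\yo_\oneR\to X$ together with a proof of $\mr{comp}(f,g,h)$, namely $f(\synz)=g$ and $(n:\yo_\oneR)\to f(\syns n)=h(n,fn)$.

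Compared with \Cref{lem:vcindbool}, the single difficulty is that $r$ lands in $X+\unit$, so the naive recursion step $\tilde h(n,x):=s\,h(n,rx)$ is ill-typed: $rx:X+\unit$ but $h$ wants an argument in $X$. My key move is to collapse the spurious $\unit$-summand using the very element $g$ that the eliminator always hands us. Since each call of the eliminator carries $g:X$, the carrier is inhabited at that moment, and I can upgrade the retraction onto $X+\unit$ to a genuine retraction onto $X$: put $s':=s\circ\inl:X\to\yo_\oneR$ and $r':=[\mr{id}_X,\lambdaop \_ . g]\circ r:\yo_\oneR\to X$, the bracket denoting the copairing that sends $\inl x\mapsto x$ and $\inr\star\mapsto g$. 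A one-line calculation using $p$ gives $r'(s'(x))=[\mr{id}_X,\lambdaop \_ . g](\inl x)=x$, so $r'\circ s'=\mr{id}_X$.

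I would then feed $(r',s')$ into the construction of \Cref{lem:vcindbool}, which, as the remark after that lemma records, applies verbatim to any type carrying a retraction out of $\yo_\oneR$, since it only invokes $\yo_\oneR$-recursion (\Cref{thm:pruyo1}). Concretely, define $\tilde f:\yo_\oneR\to\yo_\oneR$ by $\tilde f(\synz):=s'(g)$ and $\tilde f(\syns n):=s'\,h(n,r'(\tilde f n))$, and set $f:=r'\circ\tilde f$. Then $f(\synz)=r'(s'(g))=g$ by $r'\circ s'=\mr{id}_X$, while for the step the computation rule of $\yo_\oneR$-recursion gives $\tilde f(\syns n)=s'\,h(n,r'(\tilde f n))$, and applying $r'$ collapses the detour through $\yo_\oneR$ via $r'\circ s'=\mr{id}_X$ to yield $f(\syns n)=h(n,r'(\tilde f n))=h(n,fn)$, as $fn=r'(\tilde f n)$ by definition. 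This establishes $\mr{comp}(f,g,h)$ and hence $X:\prv$ with the same carrier, so the assignment is the desired $\Phi$.

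The step I expect to be the crux is exactly the typing mismatch at the recursive call, i.e. guaranteeing that the recursion never escapes into the $\inr$-summand. The default-$g$ trick dissolves this by making $r'$ total into $X$, so that the computation rules hold on the nose rather than merely over the subobject where the recursion stays in $\inl$. Should this clash with the strictness/realignment demands of the ambient gluing, I would instead keep the recursion valued in $X+\unit$ and prove, by the contextual eliminator $\bool^+:\cprv$ of \Cref{lem:vcindbool} together with decidable equality of $\yo_\oneR$ (\Cref{lem:deceq}), the invariant that $r(\tilde f n)$ always lies in the $\inl$-summand; this is the route the remark hints at, and it is more laborious but avoids choosing a default value.
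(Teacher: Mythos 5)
Your proof is correct, and it takes a genuinely different --- and shorter --- route than the paper's at the crux. Both arguments share the same skeleton (encode with $s$, recurse in $\yo_\oneR$ via \Cref{thm:pruyo1}, decode), but they diverge on how to handle the $\unit$-summand. The paper keeps the recursion valued in $X+\unit$: its step function returns $s\,h(n,rx)$ when $rx$ lies in $X$ and the default $s\,g$ otherwise, and its final decoding likewise defaults to $g$. With these defaults the successor computation rule genuinely fails in the $\inr$-branch (one would need $g=h(n,g)$), so the paper must prove an invariant --- that $r\circ\Tilde f$ never hits $\inr\star$ --- which it does by exhibiting a lift through $\inl\colon X\to X+\unit$ via a pullback square along $\langle !_X,!_{\unit}\rangle$, invoking the zero/successor dichotomy and the predecessor section (\Cref{lem:deceq}, \Cref{lem:predsec}). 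Your default-$g$ trick removes the problem rather than proving it away: upgrading $(r,s)$ to a genuine retraction $r'=[\mr{id}_X,\lambdaop\_.g]\circ r$, $s'=s\circ\inl$ onto $X$ itself makes the step function uniformly $s'\circ h(n,-)\circ r'$, so $r'\circ s'=\mr{id}_X$ cancels the encoding unconditionally: the equations $f(\synz)=g$ and $f(\syns n)=h(n,fn)$ hold even without knowing that the recursion stays in the $\inl$-summand, because the default is threaded through $h$ instead of short-circuiting it. This reduces \Cref{thm:prvbartoprv} exactly to the situation of the remark following \Cref{lem:vcindbool}, uses nothing beyond coproduct elimination and the retraction equation $p$, and dispenses with \Cref{lem:predsec}, \Cref{lem:deceq} and the lifting argument entirely. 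What the paper's longer route buys is the invariant itself --- the canonical decoding $r\circ\Tilde f$ is shown to remain in the $\inl$-summand, so the ``natural'' definition of $f$ works on the nose --- but the theorem as stated does not need this extra information. Your closing worry about realignment is unfounded here: the statement is purely internal to $\ShR$ and involves no alignment over syntax, so your primary construction stands as is (and your fallback is, in essence, the paper's actual proof).
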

\begin{proof}
	Let $(X,r,s,p):\prvbar$, $g:X$ and $h:\yo_\oneR\to X\to X$. We inductively (\Cref{thm:pruyo1}) define a helper function $\Tilde{f}:\yo_\oneR\to\yo_\oneR$ with initial value
	\[\Tilde{g}:=sg\]
	and step function
	\begin{align*}
		\Tilde h&:\yo_\oneR\to\yo_\oneR\to\yo_\oneR\\
		\Tilde h(n,x)&:=
		\begin{cases}
			sh(n,rx), &(rx:X)\\
			sg, &(rx=\star).
		\end{cases}
	\end{align*}
	Then, we define
	\begin{align*}
		f&:\yo_\oneR\to X\\
		f(n)&:=
		\begin{cases}
			r\Tilde fn, &(r\Tilde fn :X)\\
			g, &(r\Tilde fn=\star).
		\end{cases}
	\end{align*}
	We verify the computation rules of this $f$. We clearly have that $f(0)=rsg=g$. It is also true that
	\[f(\syns n)=r(\Tilde f(\syns n))=rsh(n,r\Tilde fn)=h(n,fn),\]
	if $r(\Tilde f(\syns n)):X$. Otherwise, this equality does not necessarily hold, so we need to show that $(n:\yo_\oneR)\to (x:X)\times (r\Tilde f(\syns n)=x)$. This $x$ is of course given by $h(n,r(\mop{\Tilde f} n))$, but we don't have a suitable induction principle to show this. Instead, we prove that there is a lift
  	\[
		\begin{tikzcd}[column sep=large]
			& X
			\arrow[d, "\inl"]\\
			\yo_\oneR
			\arrow[r, swap, "r\circ\Tilde f"]
			\arrow[ru, dashed]
			& X+\unit.
		\end{tikzcd}
	\]
	Since
	\[
		\begin{tikzcd}[column sep=huge]
		X
		\arrow[r, "!"]
		\arrow[d, swap, "\inl"]
		& \unit
		\arrow[d, "\inl"]\\
		X+\unit
		\arrow[r, swap, "{\langle !_X,!_{\unit}\rangle}"]
		& \unit + \unit
	\end{tikzcd}
	\]
	is a pullback square, it suffices to show that for any $n:Z\to\yo_\oneR$ there is a lift
  	\[
		\begin{tikzcd}
			& \unit
			\arrow[d, "\inl"]\\
			Z
			\arrow[r, swap,"{\langle !_X,!_{\unit}\rangle r\Tilde f n}"]
			\arrow[ru, dashed]
			& \unit+\unit.
		\end{tikzcd}
	\]
	Switching back to type theoretic language, we have to show that
	\begin{equation*}
		\langle !_X,!_{\unit}\rangle r(\mop{\Tilde f} n)=\inl\star.
	\end{equation*}
	If $n=0$, then
	\[\langle !_X,!_{\unit}\rangle r(\mop{\Tilde f} 0)=\langle !_X,!_{\unit}\rangle rsg=\langle !_X,!_{\unit}\rangle g=\inl\star.\]
	Otherwise, $n=\syns(\pred n)$, and we compute
	\begin{align*}
		\langle !_X,!_{\unit}\rangle r\Tilde f(\syns(\pred n))&=\langle !_X,!_{\unit}\rangle r\Tilde h(n,\Tilde f(\syns(\pred n))).
	\end{align*}
	There we need to make a case distinction for whether
	\[r\Tilde f(\syns(\pred n)):X.\]
	However, in both cases the right hand side reduces to $\inl\star$.
\end{proof}
\Cref{thm:prvbartoprv} gives us the non-dependent elimination principle for $\yo_\oneR$. The dependent one for a family of types in $\prubar$ can be deduced by taking the $\Sigma$-type, as we explain below.

\subsection{Closure Under \texorpdfstring{$\Sigma$}{Σ}- and Identity Types}
In this section we show that the interpretation of $\Sigma$- and identity types is sound.
\begin{lemma}
	The universe $\prvbar$ is closed under $\Sigma$-types.
\end{lemma}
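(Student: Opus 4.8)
The plan is to exploit the primitive recursive pairing isomorphism together with the retraction data carried by each element of $\prvbar$. Write $\bar A=(A,r_A,s_A,p_A):\prvbar$ for the base and $\bar B:A\to\prvbar$ for the family, so that for each $a:A$ we have $\bar B(a)=(B(a),r_a,s_a,p_a)$ with $r_a:\yo_\oneR\to B(a)+\unit$ and $s_a:(B(a)+\unit)\to\yo_\oneR$ satisfying $r_a\circ s_a=\mr{id}$. By \Cref{lem:rfinprod} we have $\oneR\times\oneR\cong\oneR$, hence a pairing isomorphism $\langle-,-\rangle:\yo_\oneR\times\yo_\oneR\xrightarrow{\sim}\yo_\oneR$ with projections $\pi_1,\pi_2$; concretely this is induced by the primitive recursive bijection $\setN\to\setN\times\setN$ fixed at the start of the section. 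The goal is to equip $(a:A)\times B(a)$ — which lies in $\justV_0$ since the ambient universe $\justV_0$ is closed under $\Sigma$-types — with a retraction $\yo_\oneR\to\bigl((a:A)\times B(a)\bigr)+\unit$.

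First I would define the section
\[
s_\Sigma:\bigl((a:A)\times B(a)\bigr)+\unit\to\yo_\oneR
\]
by $s_\Sigma(\inl(a,b)):=\langle s_A(\inl a),\,s_a(\inl b)\rangle$ and $s_\Sigma(\inr\star):=\langle s_A(\inr\star),\,s_A(\inr\star)\rangle$, so that the first coordinate of the encoding already records whether we are in the $\unit$ summand. Next I would define the retraction $r_\Sigma:\yo_\oneR\to\bigl((a:A)\times B(a)\bigr)+\unit$ by decoding followed by two nested coproduct case analyses: given $n:\yo_\oneR$ put $n_1:=\pi_1 n$ and $n_2:=\pi_2 n$, inspect $r_A(n_1):A+\unit$; on the summand $\inr\star$ output $\inr\star$, and on a summand $\inl a$ inspect $r_a(n_2):B(a)+\unit$, outputting $\inr\star$ on its $\unit$ part and $\inl(a,b)$ on a part $\inl b$. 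This is a legitimate morphism because the coproduct eliminator is available in the internal language of any topos, and the dependence of $r_a$ on $a$ is exactly the data packaged by the family $\bar B$.

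Finally I would verify $r_\Sigma\circ s_\Sigma=\mr{id}$. On $\inr\star$ the first coordinate decodes via $p_A$ to $r_A(s_A(\inr\star))=\inr\star$, so the outer case analysis already returns $\inr\star$. On $\inl(a,b)$ we get $r_A(\pi_1\langle s_A(\inl a),s_a(\inl b)\rangle)=r_A(s_A(\inl a))=\inl a$ by $p_A$, and then $r_a(\pi_2\langle s_A(\inl a),s_a(\inl b)\rangle)=r_a(s_a(\inl b))=\inl b$ by $p_a$, so the output is $\inl(a,b)$ as required.

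I expect the only delicate point to be bookkeeping the dependency: the section and retraction $s_a,r_a$ live over a varying $a:A$, so the outer case analysis on $r_A(n_1)$ must bind $a$ before $r_a$ is applied to $n_2$. Since this sequencing is automatic in the internal type theory of $\ShR$, no genuine obstruction arises; in particular, note that we never need the other composite $s_\Sigma\circ r_\Sigma$, since membership in $\prvbar$ only demands that $r$ be a retraction of $s$.
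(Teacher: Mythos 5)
Your proof is correct and follows essentially the same route as the paper: encode a pair $(a,b)$ as the primitive recursive pairing of its $s_A$- and $s_a$-codes, and decode by (nested) application of the component retractions, composed with the isomorphism $\yo_\oneR\cong\yo_\oneR\times\yo_\oneR$. If anything, you are more careful than the paper's own proof, which elides the $+\unit$ summands entirely (writing the decoding as $(m,n)\mapsto(r\,m,\,r_{r m}\,n)$ as though $r$ landed in $A$ rather than $A+\unit$); your nested case analysis and the observation that only the composite $r_\Sigma\circ s_\Sigma$ matters fill in exactly the details the paper leaves implicit.
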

\begin{proof}
	Assume we have $\Bar{A}=(A,r,s,p):\prvbar$, and $\Bar{B}(a)=(B(a),r_a,s_a,p_a):\prvbar$ given $a:A$. Then
	\begin{align*}
		\yo_\oneR\times\yo_\oneR&\to(a:A)\times B(a)\\
		(m,n)&\mapsto (\mop r m, \mop{r_{\mop r m}}n)
	\end{align*}
	has right inverse the map
	\begin{align*}
		(a:A)\times B(a)&\to\yo_\oneR\times\yo_\oneR\\
		(a,b)&\mapsto (\mop s a, \mop{s_a}b).
	\end{align*}
	We have a composite retraction \[\yo_\oneR\to\yo_\oneR\times\yo_\oneR\to (a:A)\times B(a).\qedhere\]
\end{proof}

\begin{lemma}\label{lem:prvbaridclosure}
	$\prvbar$ is closed under identity types of terms.
\end{lemma}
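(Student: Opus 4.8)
The plan is to exhibit, for $\bar A=(A,r,s,p):\prvbar$ and terms $a,b:\tm 0 A$, a retraction $\yo_\oneR\to Q+\unit$ where $Q:=\mr{eq}_0(A,a,b)$ is the identity type; this is exactly the data witnessing $Q:\prvbar$. The first and decisive observation is that $Q$ is a \emph{decidable proposition}. Indeed, $s:A+\unit\to\yo_\oneR$ satisfies $r\circ s=\mr{id}$, hence is a split monomorphism and reflects equality; and $\inl:A\to A+\unit$ is a coproduct injection, hence monic in the topos $\ShR$. Therefore $Q$ is logically equivalent to the proposition $s(\inl a)=_{\yo_\oneR}s(\inl b)$, which is decidable by \Cref{lem:deceq}. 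Since $\ShR$ is a $1$-topos, its identity types are subterminal, so $Q$ is a proposition and we obtain a decision term $d:Q+\neg Q$.

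Next I would build the retraction through the two-point retract $\bool^+\cong\unit+\unit$ of \Cref{lem:sheaf2retract}, with section $s_2:\bool^+\to\yo_\oneR$ and retraction $r_2:\yo_\oneR\to\bool^+$ satisfying $r_2\circ s_2=\mr{id}$; write $\mbf 0,\mbf 1$ for its two global points. Define the section $\sigma:Q+\unit\to\yo_\oneR$ by $\sigma(\inl q)=s_2(\mbf 0)$ and $\sigma(\inr\star)=s_2(\mbf 1)$. Define $\rho:\yo_\oneR\to Q+\unit$ as the composite of $r_2$ with the map $\bool^+\to Q+\unit$ that sends $\mbf 1\mapsto\inr\star$ and sends $\mbf 0$ to the result of the case analysis on $d$, namely $\inl q$ if $d=\inl q$ and $\inr\star$ if $d$ lands in $\neg Q$.

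Finally I would verify $\rho\circ\sigma=\mr{id}_{Q+\unit}$. On the right summand this is immediate: $\rho(\sigma(\inr\star))=\rho(s_2(\mbf 1))$, and $r_2 s_2(\mbf 1)=\mbf 1\mapsto\inr\star$. On the left summand, $\rho(\sigma(\inl q))$ evaluates $r_2 s_2(\mbf 0)=\mbf 0$ and hence the case analysis on $d$; the $\neg Q$-branch is refuted by $q$ via $\mr{exfalso}$, while the $Q$-branch returns some $\inl q'$, and prop-ness of $Q$ gives $q'=_Q q$, so $\inl q'=\inl q$. The main obstacle is uniformity: because closure must hold in an arbitrary context, and thus over bases where $Q$ is neither globally true nor globally false, the branch choice for $\mbf 0$ must be driven by the internal decision term $d$ rather than by a meta-level case split, and the verification of $\rho\circ\sigma=\mr{id}$ relies essentially on $Q$ being a proposition in the $1$-topos $\ShR$.
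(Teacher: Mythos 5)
Your proof is correct and takes essentially the same route as the paper: both reduce the identity type $a=_A b$ to the decidable equality $s(\inl a)=_{\yo_\oneR}s(\inl b)$ of \Cref{lem:deceq}, using that $s$ is split monic and $\inl$ is monic, and then exhibit $(a=_A b)+\unit$ as a retract of $\yo_\oneR$ via the $\bool^+$ and $\unit$ retracts of \Cref{lem:sheaf2retract}. The only difference is packaging: the paper case-splits on the decided equality and identifies $(s(\inl a)=_{\yo_\oneR}s(\inl b))+\unit$ with $\bool$ or $\unit$ in the two branches, whereas you build a single retraction through $\bool^+$ with the decision term $d$ folded into its definition; the uniformity concern you raise is also handled in the paper's version once its case split is read as internal coproduct elimination on $d$, since the type of retraction data is itself a type one may eliminate into.
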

\begin{proof}
	Assume we have $\Bar{X}=(X,r,s,p):\prubar$ and $x,y:X$.
	We shall define a retraction $r':\yo_\oneR\to (x=_Xy)+\unit$ with section $s'$.
	By application of $r$, the type $(x=_Xy)+\unit$ is a retract of $(s(x)=_{\yo_\oneR}s(y))+\unit$.
	If $s(x)=_{\yo_\oneR}s(y)$, then $(s(x)=_{\yo_\oneR}s(y))+\unit\simeq\bool$. Otherwise, $(s(x)=_{\yo_\oneR}s(y))+\unit\simeq\unit$.
	In both cases we have a retraction $r'$ of the desired type.
\end{proof}
Since $(x=_Xy)$ denotes strict equality in $\ShR$, identity induction and its computation rules hold as well.

\section{Canonicity}\label{sec:canonicity}
In this section we use synthetic Tait computability to prove canonicity for $\prtt$. We remark that this result does not immediately follow from \cite[§ 4.5.3]{sterling_first_2021}, because the type theory presented there does not contain a type of natural numbers. Other than that, the strategy of the proof is exactly analogous in that we lift the syntax from a strong $\modal$-modal universe to a larger, strong one. Since liftings of `negative' types such as $\Pi$-, $\Sigma$- and identity types is trivial, we shall not repeat their definitions here.
\begin{theorem}\label{thm:prttcan}
	Let $m:\mr 1\to\tm 0(\synN):\prtt$ be a closed term of natural numbers type; then there exists an $n:\setN$ such that $m=\syns^n\synz$, meant as a statement about global elements in $\Set$.
\end{theorem}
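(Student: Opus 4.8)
The plan is to prove canonicity for $\prtt$ using synthetic Tait computability, following the strategy sketched in \Cref{sec:stc}. Let me think about how to set this up.

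The setting: We have the syntactic category $\prtt$, and we want to prove that every closed term of natural numbers type is a numeral. The STC approach is to glue along the global sections functor $\rho: \PSh\prtt \to \Set$ (here $\mc S = \Set$), forming the glue topos $\glue$. This topos has an open modality $\modal$ (whose modal types are the "syntactic" types) and a closed modality $\notmodal$ (whose modal types carry the "computability data" over closed syntax). The goal is to construct a section $s$ of $j^*$ internally in $\glue$ that aligns the computability structure over the syntax.

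The key idea: we interpret each type $A$ of the syntax as a type in $\glue$ equipped with a proof-relevant logical relation (a "computability predicate") over the set of closed terms of $A$. For the natural numbers specifically, the computability predicate should encode "being a numeral." We define, internally, a computability structure on $\synN$ such that a computable element of $\synN$ is a pair of a closed syntactic term together with a witness that it equals $\syns^n\synz$ for some actual $n:\setN$. Crucially, this is definable as an inductive-style predicate built using the closed modality $\notmodal$: the computability witness for $\synN$ is the smallest $\notmodal$-modal family containing $\synz$ and closed under $\syns$.

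First I would set up the STC framework: instantiate $\rho$ as the global sections functor (as the excerpt notes, this choice aligns canonical forms over closed terms), and work internally in $\glue$ using the extent-type notation $\{A \mid \xi \hookrightarrow a\}$. Then I would define the computability structures for each type former. The negative types ($\Pi$, $\Sigma$, identity) are handled exactly as in \cite{sterling_first_2021}, and as the excerpt says, their liftings are trivial, so I would not repeat them. The universes are lifted by realignment (strongness), and here $\synU_0$ presents no extra difficulty since its computability structure is just the universe of computability structures on $\synU_0$-small types. The genuinely new work is the natural numbers type, which is absent from Sterling's treatment.

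The main obstacle is defining the computability structure on $\synN$ and verifying that the restricted eliminator $\synind$ respects it. I would define the predicate $\synN^*$ over $\synN$ as an inductively generated $\notmodal$-modal type: the total space is equipped with constructors $\synz^*$ lying over $\synz$ and $\syns^*$ lifting $\syns$, aligned over the syntactic $\synz$ and $\syns$ via extent types $\{\,-\mid\modphi\hookrightarrow-\,\}$. The step that requires care is the eliminator: given a computable motive $C:\tm 0\synN\to\synU_0$ with computability structure, and computable methods $c_\synz, c_\syns$, I must show $\synind$ produces a computable element and that the $\beta$-rules hold strictly. Because $\synind$ in $\prtt$ eliminates only into $\synU_0$ (no $\Pi$-types), the motive's computability data lives at the right universe level, and I can perform the recursion on the inductive computability witness $\synN^*$. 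I expect the delicate point to be ensuring the computation rules hold up to strict equality rather than mere isomorphism, which is exactly where realignment of the strong universes is invoked. Once the section $s$ is constructed, canonicity follows by evaluating $s$ at the closed term $m$: the $\notmodal$-component of $s(m)$ is precisely a witness $n:\setN$ with $m = \syns^n\synz$ as global elements in $\Set$.
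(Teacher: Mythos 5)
Your proposal follows the same route as the paper's proof: glue along the global sections functor $\Gamma:\PSh\prtt\to\Set$, reuse Sterling's liftings for the negative types and universes, and equip $\synN$ with the computability structure $\Sigma_{\mr{syn}:\tm 0\synN}\notmodal\bigl(\Sigma_{n:\setN}(\mr{syn}=\syns^n\synz)\bigr)$, with $\synind^*$ defined by recursion on the numeral witness under the closed modality and strictness secured by realignment. The paper's Figure 3 makes your sketch concrete (including the $\mr{try}$-clauses and the final $\mr{Hom}$ computation extracting the witness in $\Set$ via $i^*j_*=\Gamma$), but the underlying argument is identical.
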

\begin{proof}
	Form the glue topos $\glue$ along the left exact global sections functor $\Gamma:\Pr\prtt\to\Set$. Using the natural numbers object $\setN$ of  $\glue$, we lift $\synN, \synz, \syns$ and $\synind$. The computation rules for $\synind^*$ follow from the ones for $\synind$. The lifted terms are presented in \Cref{fig:canonicity}.
	\begin{figure*}
		\caption{Lifted Syntax for Canonicity}
		\label{fig:canonicity}
		\begin{minipage}{\fulltextwidth}
      \footnotesize
			\begin{align*}
				\synN^* &: \{ \mr{tp}^*\mid\xi\hookrightarrow \synN\}\\
				\synN^* &:= (\synN, \Sigma_{\mr{syn} : \tm 0\synN}\notmodal(\Sigma_{n : \setN} (\mr{syn} = \syns^n\synz)))\\
				\synz^* &: \{\mr{tm}^*(\synN^*)\mid\xi\hookrightarrow(\synz,\star)\}\\
				\synz^* &:= (\synz, \eta(0, \star))\\
				\syns^* &: \{\mr{tm}^*(\synN^*)\to\mr{tm}^*(\synN^*)\mid\xi\hookrightarrow(\syns, \lambdaop \_ . \star)\}\\
				\syns^* (\mr{syn}, \mr{sem}) &: \{ \Sigma_{\mr{syn} : \tm 0\synN}\notmodal(\Sigma_{n : \setN} (\mr{syn} = \syns^n\synz))\mid\xi\hookrightarrow(\syns(\mr{syn}),\star)\}\\
				\syns^* (\mr{syn}, \mr{sem}) &:= \mr{try}\: \mr{sem}\: [\alpha\mid\xi\hookrightarrow(\syns(\mr{syn}),\star)]\\
				&\qquad\text{where}\\
				&\qquad\alpha :\Sigma_{n : \setN} (\mr{syn} = \syns^n\synz) \\
				&\qquad\quad\to
				\{\Sigma_{\mr{syn}':\tm 0\synN}\notmodal\Sigma_{n : \setN} (\mr{syn}' = \syns^{n+1}\synz)\mid\xi\hookrightarrow(\syns(\mr{syn}),\star)\}\\
				&\qquad\alpha (n,p) \:= (\syns(\mr{syn}),\eta(n + 1, \mr{app}_{\syns}(p)))\\
				\synind^* &:\{(C:\synN^*\to\mr{tp}^*)\\
				&\quad\to (c_{\synz}:C(\synz))\\
				&\quad\to(c_{\syns}:(n:\synN^*)\to C(n)\to C(\syns n))\\
				&\quad\to(\mr{syn}:\synN^*)\\
				&\quad\to C(\mr{syn}))\mid\xi\hookrightarrow\synind\}\\
				\synind^*(C,c_{\synz},c_{\syns},x)&:=\mr{try}\: \mr{syn}.\mr{sem} [\beta\mid\xi\hookrightarrow\synind(C,c_{\synz},c_{\syns},\mr{syn})]\\
				&\qquad\text{where}\\
				&\qquad\beta:\Sigma_{n:\setN}(\mr{syn}=\syns^n\synz)\to \{C(\mr{syn})\mid\xi\hookrightarrow\synind(C,c_{\synz},c_{\syns},\mr{syn})\}\\
				&\qquad\beta:=\begin{cases}
					(0,\_)&\mapsto c_{\synz}\\
					(m+1,\_)&\mapsto c_{\syns}(\syns^m\synz,\beta(m,\mr{refl}))
				\end{cases}
			\end{align*}
		\end{minipage}
	\end{figure*}
	
	We remark that the partial elements $(\syns\mr{syn}, \star)$ appearing in the extent types and $\mr{try}$ statements have an implicit $\lambdaop z : \xi$ in front. The definition of $\alpha$ is valid, because
	\[z : \xi\vdash \eta_{\backslash\xi}(n + 1, \mr{app}_{\syns}(p)) = \star:\notmodal(\ldots).\]
	
	A term $m : \unit_{\Pr\prtt}\to\tm 0\synN$ lifts to $m^* : \{\unit_{\glue}\to\mr{tm}^*(\synN^*)\mid \xi\hookrightarrow m\}$. Unfolding the definition, we have a global element of $\notmodal(\Sigma_{n : \setN} (m = \syns^n\synz))$. This computes as follows.
	\begin{align*}
		&\mr{Hom}_{\glue}(1_{\glue},\notmodal(\Sigma_{n : \setN} (m = \syns^n\synz)))\\
		=&\mr{Hom}_{\glue}(1_{\glue},i_*i^*(\Sigma_{n : \setN} (m = \syns^n\synz)))\tag{by definition}\\
		\cong&\mr{Hom}_{\Set}(1_{\Set},i^*(\Sigma_{n : \setN} (m = \syns^n\synz)))\tag{$i_*$ lex}\\
		\cong&\mr{Hom}_{\Set}(1_{\Set},\Sigma_{n : \setN} i^*(m = \syns^n\synz))\tag{$i^*$ cocont.}\\
		\cong&\mr{Hom}_{\Set}(1_{\Set},\Sigma_{n : \setN} (i^*(m) = i^*(\syns^n\synz)))\tag{$i^*$ lex}\\
		=&\mr{Hom}_{\Set}(1_{\Set},\Sigma_{n : \setN} (m = \syns^n\synz))
	\end{align*}
	The last step is true, because $m$ above is actually $j_*(\yo_m))$ and so $i^*(j_*(\yo_m))=\Gammaop \yo_m=m$.
\end{proof}

\section{Soundness for Primitive Recursion}\label{sec:soundness}
In this section we use a synthetic gluing argument to show that the set-theoretical interpretation of functions actually coincides with the primitive recursive one.
The interpretations $\llb{-}_{\Set}$ and $\llb{-}_{\ShR}$ are LCC functors. Hence, they extend to left exact functors $\widehat{\llb{-}_{\ShR}}$ and $\widehat{\llb{-}_{\Set}}$ along their Yoneda embedding. It follows that we have a left exact extension
\[\rho:=\Gamma(\widehat{\llb{-}_{\ShR}})\times\widehat{\llb{-}_{\Set}}:\Pr\prtt\to\Set\]
and the comma category $\glue:=\Set\downarrow\rho$ is a topos.
\begin{definition}
	For the universe $\synUfib_0:\dot{\synU}_0\to\synU_0$ in $\prtt$ we define the morphism
	\[\llb{\synUfib_0}_{\glue}:\llb{\dot\synU_0}_{\glue}\to\llb{\synU_0}_{\glue}\]
	in $\glue$ by the square
	\[
	\begin{tikzcd}
		\Gamma(\dot\synU_0)
		\arrow[r, "\Gamma(\synUfib)"]
		\arrow[d, swap, "\llb{\dot\synU_0}_{\glue}"]
		& \Gamma(\synU_0)
		\arrow[d, "\llb{\synU_0}_{\glue}"]\\
		\rho(\dot\synU_0)
		\arrow[r, swap, "\rho(\synUfib)"]
		& \rho(\synU_0),
	\end{tikzcd}
	\]
	where the vertical arrows are each given by the `diagonal'
	\[x\mapsto(\Gamma(\widehat{\llb{x}_{\ShR}}),\widehat{\llb{x}_{\Set}}).\]
\end{definition}

\begin{lemma}
	The map $\llb{\synUfib_0}_{\glue}$ is a universe for objects $\Gamma(X)\to\rho(X)$ given by the same diagonal as above.
\end{lemma}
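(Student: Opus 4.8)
The plan is to unfold the defining square in the comma category $\glue=\Set\downarrow\rho$ and to show, fiber by fiber, that $\llb{\synUfib_0}_{\glue}$ classifies exactly the diagonal objects $\Gamma(X)\to\rho(X)$. Recall that an object of $\glue$ is a map $S\to\rho(Y)$, and that by the preceding definition $\llb{\synU_0}_{\glue}$ is the diagonal $\Gamma(\synU_0)\to\rho(\synU_0)$ while $\llb{\dot\synU_0}_{\glue}$ is the diagonal $\Gamma(\dot\synU_0)\to\rho(\dot\synU_0)$. From the higher-order abstract syntax of $\prtt$ we have $\tm 1\synU_0=\tp 0$, so a global point of the object $\llb{\synU_0}_{\glue}$ is precisely a code for a $\synU_0$-small type $X$, and in the syntactic theory that type is recovered as the fiber $\synUfib_0^{-1}(X)$ of $\synUfib_0:\dot\synU_0\to\synU_0$ over the corresponding code.

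First I would reduce the statement to a fiber computation. The morphism $\llb{\synUfib_0}_{\glue}$ is presented by the commuting square whose horizontal maps are $\Gamma(\synUfib)$ and $\rho(\synUfib)$ and whose vertical legs are the diagonals $x\mapsto(\Gamma(\widehat{\llb{x}_{\ShR}}),\widehat{\llb{x}_{\Set}})$. Both $\Gamma$ and $\rho$ are left exact (the latter by construction as a product of Yoneda extensions of LCC functors, the former as global sections), so they preserve the pullback square that exhibits $X$ as the fiber of $\synUfib_0$ over its code. Applying $\Gamma$ to the top row and $\rho$ to the bottom row and restricting over the point naming $X$ therefore produces exactly the diagonal $\Gamma(X)\to\rho(X)$. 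Thus the fiber of $\llb{\synUfib_0}_{\glue}$ over a code is the claimed diagonal object, with structure map the diagonal inherited from the vertical legs.

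Next I would establish the universal property. Given any object of $\glue$ of the form $\Gamma(X)\to\rho(X)$ with $X$ a $\synU_0$-small type and structure map the diagonal, the code of $X$ determines a classifying map into $\llb{\synU_0}_{\glue}$, and by the fiber computation pulling $\llb{\synUfib_0}_{\glue}$ back along it recovers the object. Pullback stability of the classification follows from left-exactness, so reindexing along substitutions is respected; conversely, every pullback of $\llb{\synUfib_0}_{\glue}$ is again of the diagonal form, since the vertical legs of the defining square are diagonals and $\Gamma$, $\rho$ preserve the limits computing the base change. This yields the bijective correspondence between classifying maps into $\llb{\synU_0}_{\glue}$ and diagonal objects $\Gamma(X)\to\rho(X)$.

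The main obstacle is strictness rather than the bijection itself: to obtain a universe à la Tarski, classifying on the nose rather than up to isomorphism, the classification must be strictly stable under substitution. This is exactly where the strong-universe hypothesis of \Cref{def:pruniverses} enters, since realignment along the relevant monomorphisms in $\ShR$ (and in $\Set$) lets us rigidify the diagonal maps so that the fibers agree strictly with the interpretations $\widehat{\llb{-}_{\ShR}}$ and $\widehat{\llb{-}_{\Set}}$. With realignment available, the remaining checks — that the diagonal structure is natural in the code and compatible with reindexing — are routine and follow from functoriality of $\Gamma$, of $\rho$, and of the Yoneda extensions.
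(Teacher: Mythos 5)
Your proof is correct and takes essentially the same route as the paper's: both arguments apply the left-exact functors $\Gamma$ and $\rho$ to the syntactic pullback square exhibiting a type $X$ over its code, conclude that the induced square in $\glue$ is a pullback because its top and bottom faces are pullbacks of sets (pullbacks in the comma category being computed componentwise), and derive the strictness (realignment) requirement from realignment of the universes in $\ShR$ and $\Set$. Your additional elaboration of the classifying correspondence is just a spelled-out version of what the paper leaves implicit in the phrase ``is a universe for.''
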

\begin{proof}
	A pullback square
	\[
	\begin{tikzcd}
		\dot X
		\arrow[r, ""]
		\arrow[d, ""]
		& \dot\synU_0
		\arrow[d, "\synUfib"]\\
		X
		\arrow[r, ""]
		& \synU_0,
	\end{tikzcd}
	\]
	induces a square
	\[
	\begin{tikzcd}
		\Gamma(\dot X)
		\arrow[r, ""]
		\arrow[d, ""]
		& \Gamma(\synUfib)
		\arrow[d, "\llb{\synUfib_0}_{\glue}"]\\
		\rho(X)
		\arrow[r, ""]
		& \rho(\synU_0)
	\end{tikzcd}
	\]
	in $\glue$. Because $\rho$ and $\Gamma$ preserve pullbacks, the top and bottom sides of the corresponding cube of sets are pullbacks, too. It follows that the square is a pullback in $\glue$ as well. Realignment follows from realignment of the universes in $\ShR$ and $\Set$.
\end{proof}

\begin{theorem}
	There is a sound interpretation $\llb{-}_{\glue}:\prtt\to\glue$.
\end{theorem}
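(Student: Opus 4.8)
The plan is to construct the interpretation $\llb{-}_{\glue}$ by equipping the glue topos $\glue = \Set\downarrow\rho$ with all the type-theoretic structure required by the signature of $\prtt$ in \Cref{fig:PRTT}, and to verify that this structure is preserved strictly along the two projections. Since $\glue$ is a topos, it automatically models the negative connectives ($\Sigma$-, $\Pi$-, and identity types) and supports the universe $\llb{\synUfib_0}_{\glue}$ constructed in the preceding lemma; the genuinely new work is to interpret the natural numbers type $\synN$ together with its restricted eliminator $\synind$ into $\synU_0$, in such a way that the computation rules hold strictly and the interpretation agrees on the nose with $\llb{-}_{\Set}$ and $\llb{-}_{\ShR}$ after the respective projections.

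First I would observe that $\glue$ inherits a natural numbers object from its two factors: $\rho$ is left exact and the standard model and the sheaf model each interpret $\synN$ (as $\setN$ and as $\yo_\oneR$ respectively), so the object $\llb{\synN}_{\glue} := \bigl(\Gamma(\widehat{\llb{\synN}_{\ShR}}) \to \rho(\synN)\bigr)$ sits in $\glue$ with the expected successor and zero structure. Second, and this is the crux, I would show that $\llb{\synN}_{\glue}$ lands in the interpreted universe $\llb{\synU_0}_{\glue}$ and admits the eliminator $\synind$ into any family valued in that universe. Here I invoke the semantics of \Cref{sec:semanticsr}: in $\ShR$ the universe $\llb{\synU_0}_{\ShR} = \prvbar$ is precisely the universe of $\yo_\oneR$-retracts, and \Cref{thm:prvbartoprv} (together with the $\Sigma$-closure of $\prvbar$) furnishes the elimination principle for $\yo_\oneR$ into every such type. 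On the $\Set$ side the eliminator is the ordinary primitive recursor on $\setN$. The two eliminators are compatible because both satisfy the same computation rules ($f(\synz)=g$ and $f(\syns n)=h(n,f n)$), so the pair assembles to a morphism in $\glue$ by the diagonal recipe $x\mapsto(\Gamma(\widehat{\llb{x}_{\ShR}}),\widehat{\llb{x}_{\Set}})$.

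Having interpreted each former, I would check that the equations of the signature hold. The judgemental $\beta$/$\eta$-rules and the $\synind$ computation rules reduce, component-wise, to the corresponding strict equalities in $\Set$ and in $\ShR$, which hold by \Cref{thm:prvbartoprv}, \Cref{lem:prvbaridclosure}, and soundness of $\llb{-}_{\Set}$; realignment of the universe is exactly the content of the preceding lemma. Because the two immersions $i^*$ and $j^*$ recover $\llb{-}_{\Set}$ and $\Gamma\circ\widehat{\llb{-}_{\ShR}}$ on the nose, each interpreted former projects strictly to its counterpart, so $\llb{-}_{\glue}$ is an LCC functor respecting the whole signature.

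The main obstacle I anticipate is not the existence of the structure but its \emph{strictness}: the logical framework demands that the computation and coherence laws (the $\lift{\alpha}{\beta}$ functoriality equations, the $\mr{pair}$/$\lambdaop$ isomorphisms, and especially the $\synind$ step rule) hold up to strict equality, not merely up to isomorphism. Controlling this forces me to use the strong (realigning) universes assumed in \Cref{def:pruniverses} and to lift syntax into them as in the canonicity argument of \Cref{sec:canonicity}; the delicate point is that $\llb{\synU_0}_{\ShR}=\prvbar$ is only a universe of \emph{retracts} rather than of bona fide natural numbers algebras, so the eliminator is defined through the coercion $\Phi:\prvbar\to\prv$ and one must check that its defining case distinctions (which appear in \Cref{thm:prvbartoprv}) still yield strictly commuting squares in the glued topos. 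Once realignment is in place, this coherence bookkeeping is the heart of the proof, and the remaining verifications are routine.
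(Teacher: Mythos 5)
Your overall architecture matches the paper's: defer the negative connectives and the universe hierarchy to Sterling's lifting along strong universes, take $\llb{\synU_0}_{\glue}$ from the preceding lemma, interpret $\synN$ as a diagonal, and concentrate on $\synind$. But at that crux your argument has a genuine gap. A morphism in $\glue=\Set\downarrow\rho$ is a pair consisting of a morphism of $\PSh\prtt$ (for representables, a syntactic term) together with a map of sets over it; so ``the $\ShR$-eliminator paired with the $\Set$-eliminator'' is not by itself a candidate morphism of $\glue$, and the compatibility you must verify is not that the two semantic eliminators ``satisfy the same computation rules'' as each other. What must commute is the square whose vertical maps are the diagonals $x\mapsto(\Gamma(\widehat{\llb{x}_{\ShR}}),\widehat{\llb{x}_{\Set}})$ out of the sets of \emph{closed syntactic terms} $\Gamma(\synN)$ and $\Gamma(X)$: the bottom map must agree, on syntactic points, with the interpretation of the \emph{syntactic} eliminator. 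The paper's proof rests on exactly this observation: any syntactic term $f:\synN\to X$ yields a filler of that square, simply because $\llb{-}_{\ShR}$ and $\llb{-}_{\Set}$ are sound, functorial models, so interpretation commutes with composition in each component. Hence the glue interpretation of (non-dependent) $\synind$ is just the image of the syntactic eliminator under $(\Gamma(-),\rho(-))$, its computation rules hold because they hold judgementally and are preserved by $\Gamma$ and $\rho$, and nothing about the case distinctions inside $\Phi:\prvbar\to\prv$ (\Cref{thm:prvbartoprv}) needs to be revisited at the glue level: that work is already encapsulated in \Cref{thm:interpretationR}, which you should cite as a black box rather than re-enter.

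Two further points. First, your $\llb{\synN}_{\glue}$ has the wrong domain: it must be $\Gamma(\synN)\to\rho(\synN)$ with $\Gamma(\synN)$ the global sections of the syntactic presheaf, not $\Gamma(\widehat{\llb{\synN}_{\ShR}})$, which is a factor of the codomain; alignment over syntax is precisely what \Cref{thm:prttsound} later exploits, so this is not a harmless slip. Second, you leave the dependent eliminator essentially unaddressed: the paper reduces it to the non-dependent one by eliminating into the total space of the family (using $\Sigma$-closure of $\llb{\synU_0}_{\glue}$) and projecting out; your parenthetical appeal to $\Sigma$-closure of $\prvbar$ gestures at this but does not carry it out.
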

\begin{proof}
	The interpretation of the type and term judgements needs to be defined first. In~\cite[Section 4.4]{sterling_first_2021}, Sterling defines a general way of lifting a syntactic model, that is an object of $\modal\llb{\synU_0}_{\glue}$ to a computability model along a hierarchy of strong universes $\llb{\synU_0}_{\glue}\leq\mc V\leq\mc W$. The larger universes are assumed to be closed under $\Pi$-types.
	
	We use the same lifting of type and term judgements and refrain from repeating it here, to keep the focus on the recursion principle.
	
	The interpretation $\llb{\synU_0}_{\glue}$ of the universe $\synU_0$ is defined in the previous lemma. It is clearly aligned over $\synU_0$. It inherits its closure under $\Sigma$- and identity types. Since $\synU_0$ is not closed under $\Pi$-types, it is not an issue that $\Gamma$ might not preserve them.
	
	We put
	\begin{align*}
		\llb{\synN}_{\glue}:\Gamma(\synN)&\to\rho(\synN)
	\end{align*}
	to be the same `diagonal' as above, aligned over $\synN$.
	
	By soundness of the interpretations $\llb{-}_{\ShR}$ and $\llb{-}_{\Set}$, any syntactic term $f:\synN\to X$ yields a filler
	\[
	\begin{tikzcd}
		\Gamma(\synN)
		\arrow[d, swap, "\llb{\synN}_{\glue}"]
		\arrow[r, "\Gamma(f)"]
		& \Gamma(X)
		\arrow[d, "{\llb{X}_{\glue}}"]\\
		\rho(\synN)
		\arrow[r, swap, "\rho(f)"]
		& \rho(X).
	\end{tikzcd}
	\]
	This shows that the obvious interpretations of $\syns$ and non-dependent $\synind$ are sound as well. The dependent elimination principle for $\synN$ follows from the non-dependent one by eliminating into and projecting from the total space of the type family.
	
	The hierarchy of universes and $\Pi$-types can be lifted in the same way as described in~\cite{sterling_first_2021}.
\end{proof}

\begin{theorem}\label{thm:prttsound}
	All $\prtt$-definable functions between the natural numbers are primitive recursive. To be precise, for any term
	\[n:\synN\vdash f(n):\synN\]
	in $\prtt$ the function $\llb{\lambdaop (n:N).f(n)}_{\Set}$ is primitive recursive.
\end{theorem}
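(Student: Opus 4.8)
The plan is to deduce the statement from the naturality square that the sound interpretation $\llb{-}_{\glue}$ attaches to the morphism $f:\synN\to\synN$ in the glue topos $\glue=\Set\downarrow\rho$, using canonicity to control the behaviour on closed terms. First I would unwind the bottom object of that square. Since $\widehat{\llb{\synN}_{\ShR}}=\yo_\oneR$ (by \Cref{thm:interpretationR}) and $\widehat{\llb{\synN}_{\Set}}=\setN$, we have $\rho(\synN)=\Gamma(\yo_\oneR)\times\setN$, and the first factor is $\ShR(\unit,\yo_\oneR)\cong\catR(\zeroR,\oneR)=\setN$, so $\rho(\synN)\cong\setN\times\setN$. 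Correspondingly $\rho(f)$ acts componentwise as $(\Gamma\llb{f}_{\ShR},\llb{f}_{\Set})$. The decisive point about the first component is that it is honestly primitive recursive: it is the image under the global sections functor of the morphism $\llb{f}_{\ShR}:\yo_\oneR\to\yo_\oneR$, and by $\ShR(\yo_\oneR,\yo_\oneR)\cong\catR(\oneR,\oneR)$ every such morphism is a primitive recursive function $\setN\to\setN$.

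Next I would identify the vertical map $\llb{\synN}_{\glue}:\Gamma(\synN)\to\rho(\synN)$, which by construction is the `diagonal' $x\mapsto(\Gamma(\widehat{\llb{x}_{\ShR}}),\widehat{\llb{x}_{\Set}})$. Invoking canonicity (\Cref{thm:prttcan}), every closed term $x:\unit\to\synN$ equals a numeral $\syns^n\synz$, and both interpretations send the $n$-th numeral to $n$; hence $\llb{\synN}_{\glue}$ factors through the diagonal $\Delta:\setN\hookrightarrow\setN\times\setN$, $n\mapsto(n,n)$, and is in fact a bijection onto it since distinct numerals have distinct interpretations.

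With these identifications I would then chase the naturality square of $\llb{f}_{\glue}$, whose two vertical sides are $\llb{\synN}_{\glue}$, whose top side is the syntactic map $\Gamma(f)$ (post-composition with $f$), and whose bottom side is $\rho(f)$. Fix $n\in\setN$ and the numeral $\bar n:=\syns^n\synz$. Going down-then-right sends $\bar n\mapsto(n,n)\mapsto(\Gamma\llb{f}_{\ShR}(n),\llb{f}_{\Set}(n))$; going right-then-down sends $\bar n\mapsto f\circ\bar n=\syns^m\synz\mapsto(m,m)$, where $\syns^m\synz$ is the canonical form of $f\circ\bar n$ furnished by canonicity. Commutativity forces $(\Gamma\llb{f}_{\ShR}(n),\llb{f}_{\Set}(n))=(m,m)$, whence $\llb{f}_{\Set}(n)=\Gamma\llb{f}_{\ShR}(n)$ for all $n$. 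Thus $\llb{f}_{\Set}$ coincides with the primitive recursive function $\Gamma\llb{f}_{\ShR}$ and is itself primitive recursive.

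The hard part is the linchpin role played by canonicity: it is precisely what guarantees that the top (syntactic) arrow carries numerals to numerals, keeping both legs of the square on the diagonal so that commutativity can pin the two components of $\rho(f)$ together pointwise. Without it one would only learn that $\llb{f}_{\Set}$ and $\Gamma\llb{f}_{\ShR}$ agree on whatever the semantic component happens to see, with no genuine handle on the standard interpretation. The only other steps requiring care are the componentwise description of $\rho$ and $\rho(f)$ and the identification of the first component with an actual primitive recursive function through \Cref{thm:interpretationR}; granting these, the remainder is a direct diagram chase.
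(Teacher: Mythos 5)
Your proposal is correct and takes essentially the same route as the paper: both arguments extract the commutative square in $\glue$ attached to $f$ (via the sound glued interpretation), use canonicity to identify $\Gamma(\synN)$ with $\setN$ and hence force the vertical maps onto the diagonal, and conclude that $\llb{f}_{\Set}$ coincides with the primitive recursive function $\Gamma(\llb{f}_{\ShR})$. Your explicit chase of numerals through both legs of the square merely fills in the step the paper compresses into ``this implies commutativity of the diagram.''
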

\begin{proof}
	We made sure that the model $\llb{-}_{\glue}$ is aligned over $\prtt$. In other words, we have defined a structure-preserving section of the projection $\glue\to\PSh\prtt$.
	Hence, the assumed closed term is interpreted into the computability algebra as a global element
	\[f^*:\mr 1_{\glue}\to\{\mr{tm}^*(\synN\to\synN)^*)\mid\xi\hookrightarrow \lambdaop (n:N).f(n)\}.\]

	Unfolding the definitions, we get a commutative square
	\[
	\begin{tikzcd}[column sep=huge]
		\Gamma(\synN)
		\arrow[d, swap, "\llb{\synN}_{\glue}"]
		\arrow[r, "\Gamma(f)"]
		& \Gamma(\synN)
		\arrow[d, "\llb{\synN}_{\glue}"]\\
		\Gamma(\yo_\oneR)\times\setN
		\arrow[r, swap,"\Gamma(\widehat{\llb{f}_{\ShR}})\times\widehat{\llb{f}_{\Set}}", outer sep=4pt]
		& \Gamma(\yo_\oneR)\times\setN.
	\end{tikzcd}
	\]
	
	By canonicity, we have that $\Gamma(\synN)\cong\setN$. It is also true that $\Gamma(\yo_\oneR)\cong\setN$.	
	This implies commutativity of the diagram

  \[
		\begin{tikzcd}[column sep=large]
			\setN
			\arrow[r, "\llbracket f \rrbracket_{\Set}"]
			\arrow[d, swap, "\rotatesim"]
			& \setN
			\arrow[d, leftarrow, "\rotatesim", outer sep=4pt, pos=0.7]\\
			\Gamma(\yo_\oneR)
			\arrow[r, "\Gamma(\llb{f}_{\ShR})"']
			& \Gamma(\yo_\oneR).
		\end{tikzcd}
	\]

	Here $\Gamma(\llb{f}_{\ShR})$ is primitive recursive.
	In other words, the set-theoretic interpretation of $f$ is in fact a primitive recursive function.
\end{proof}
\begin{remark}
	Usually, when proving theorems about all open terms, such as normalisation, one has to use a \emph{figure shape} to determine which judgements are contexts. In the previous theorem we were able to avoid a more complex gluing situation by interpreting the $\lambdaop$-abstraction of $f$ and converting the global section of the exponential back to a morphism. 
\end{remark}

\section{Related Work}\label{sec:related}
The novel contribution of this work is the conservativity of primitive recursion with respect to dependent types with a full proof of soundness and canonicity.

Previously, Herbelin and Patey sketched a similar system~\cite{herbelin-patey-cprc}, the Calculus of Primitive Recursive Constructions. It is a subsystem of the Calculus of Inductive Constructions which is also conservative over Primitive Recursive Arithmetic.
The proposed soundness proof is similar in spirit, encoding types as primitive recursively decidable predicates on $\setN$. However, this simple soundness proof doesn't work with extensions to types in $\synU_1$, where we have function types, which are crucial for expressivity (even if they cannot be the target of inductions). Our system leverages the full power of dependent type theory, and is also closer to the syntax used in proof assistants such as Agda.

An extension of MLTT by additional recursion operators for well-founded relations has been studied by Paulson (c.f. \cite{PAULSON1986325}).
In recursion theory one often considers partial recursive functions. These have been studied in the context of type theory as well.
In \cite{bove_general_recursion,bove_nestedgeneralrecursion,bove_capretta_2005,bove_recursive_higher} the authors use an inductive domain predicate that characterises the inputs on which a function terminates.
Alternative approaches such as \cite{Bove2007ComputationBP,Capretta2005GeneralRV} associate to each data type a coinductive type of partial elements.
Variants of these systems have been designed for use in proof assistants (c.f. \cite{Constable1987PartialOI}).

Our system $\prtt$ is an extension of simply typed lambda calculus to full dependent type theory, potentially enhanced using a comonadic modality. One important intermediate system is the modal lambda calculus defined in \cite{pfenning_staged} in order to give a formal account of \emph{staged computation} and it is a precursor to \cite{hofmann_mixed_1998}.

\section{Conclusion and Future Work}\label{sec:conclusion}
We have shown that constructions in depedendent type theory that use elimination out of the natural numbers into universes without $\Pi$-types are primitive recursive in nature. We discuss how the system can be further conservatively extended.

A benefit of our semantic approach is that extensions can be modularly added
as long as they can be interpreted in $\ShR$ and $\glue$.
The only drawback is that such interpretations are rather involved and use a mixture of
dependent type theory (in the internal language) mixed with external reasoning about sheaves.

We expect that finitary inductive types such as lists, as well as finitary inductive type families and even small finitary induction-recursion, can be conservatively added to the calculus. This will greatly facilitate the practical mechanization of metatheory.

One could add another primitive recursive universe $\hat\synU_0:\tp 0$ with $\tm 0\hat\synU_0=\tm 0\synU_0$, yielding a code $\mr{u}_0:\tm 0(\synU_0)$ for all types $\tp 0$. One does not run into Girard's paradox due to the lack of function types.
For this, however, we need to go beyond retracts of $\yo_\oneR$, to something like $\Sigma^0_1$-definable types in $\ShR$.
A weaker version of this primitive recursive universe of codes is already definable internally by using a primitive recursive G\"{o}del encoding of the codes $\tm 0\synU_0$. However, its usefulness is uncertain because its codes only give $\tp 0$-types up to equivalence rather than judgemental equality.

It might be possible to overcome the inconvenience of not being able to define certain functions out of the natural numbers in a natural way by adding a comonadic modality $\Box$ to the theory, for example using the framework MTT~\cite{gratzer_multimodal_2021} with mode theory the walking adjunction or the walking comonad. The new type $\Box \synN$ will have a general induction principle and it should be possible to define terms  $m,n:\synN\vdash \hat f(m,n):\synN$
given $m:\Box\synN,n:\synN\vdash f(m,n):\synN$ and $m:\synN,n:\Box\synN\vdash g(m,n):\synN$ defined
by simultaneous induction together with an extensionality proof $(m, n : \synN)\vdash f(\eta m,n)=g(m,\eta n)$. 

A semantics playing the role of $\llb{-}_{\ShR}$, which we found most likely to be suitable, is as follows.
Let $\mr S$ denote the category of arities and functions between powers of natural numbers.
There is a category $\mr S\rtimes\catR$ with objects pairs of natural numbers and morphisms $(\overrightarrow u, \overrightarrow v):(\mr S\rtimes \catR)((m,n),(m',n'))$ consisting of $m'$ set-theoretic functions $u_i:\setN^m\to\setN$ and $n'$ set-theoretic functions $v_j:\setN^{m+n}\to\setN$ such that for each $x:\setN^m$ the slice $v_j(x,-)$ is primitive recursive. The projection $\pi:(\mr S\rtimes\catR)\to \mr S$ has a right adjoint inclusion functor $(-,0)$, which is also left adjoint to the functor $+:(\overrightarrow u, \overrightarrow v) \mapsto (\overrightarrow{uv},\cdot)$. The induced string of adjunctions on presheaf categories gives the required dependent right adjoint comonadic modality $+^*\circ(-,0)^*$ (c.f.~\cite[Lemma 8.2, Theorem 7.1]{gratzer_multimodal_2021}). It is yet to be checked whether $\synN$ can be soundly interpreted as $\yo_{(0,1)}$ in this setting, or if changes to the base categories, sheafification or passing to MATT~\cite{shulman_semantics_2023} are needed.

The category $\mr S\rtimes\catR$ is reminiscent of the categorical semantics of Hofmann's calculus $\mr{BC}^\omega$~\cite{hofmann_application_1997} of polytime functions with safe recursion given by the combinator
\[\mr{saferec}:\Box\mr W\to\mr W\to (\Box\mr W\to\mr W\to\mr W)\to\mr W\]
on binary strings $W$, also used in a gluing argument ot show soundness w.r.t. polytime functions.
In fact, his construction inspired ours for primitive recursion and we expect that the $\mr{BC}^\omega$ can be extended to a dependently typed system using MTT (or MATT) in a similar fashion.

Further variants of $\mr{BC}^\omega$ have been developed (c.f. \cite{hofmann_mixed_1998,hofmann_safe_2000}). However, these critically hinge on linear type systems. Hofmann's linear recursive calculus for polytime programming has been extended to a dependent type theory (see \cite{atkey2023polynomial}) using an extension of quantitative type theory (c.f. \cite{atkey2018quantitative}). It is unclear how such a system might be extended to homotopy types, see, e.g.,
\cite[Sec.~1.7.4]{riley_bunched_2022} for a discussion for quantitative type theory. Variants of linear homotopy type theory have been developed \cite{riley_bunched_2022, schreiber2014quantization}, but there is no general framework such as MTT which admits intensional identity types, linear type formers, dependent types and homotopical interpretations. The closest approximation is \cite{Licata2017AFF}, but the syntax is complicated and it does not support dependent types yet. Therefore, we do not believe that a univalent type theory for complexity classes using substructural type formers is currently within reach.

An obvious question is whether the Primitive Recursive Dependent Type Theory can be extended to a variant of Homotopy Type Theory (HoTT)~\cite{hottbook}. In plain HoTT with univalence added as an axiom there is not much hope since it is not computational. Currently, our best hope is via Cubical Type Theory (CTT)~\cite{BCH2014,CCHM2018}. Its abstract syntax is already formally defined in \cite{sterling_first_2021} so it is easy to make the necessary syntactic adjustments to its first universe. However, it is not clear how the soundness proof should be adapted because CTT cannot be interpreted in $\Set$. One could for example require that the structure maps of a cubical set be primitive recursive and embed $\Set$ into the resulting category. Even then, retracts of $\yo_\oneR$ in a category of cubical sets over $\catR$ remain of homotopy level zero, so $\llb{\synU_0}_{\ShR}$ would require a fundamentally different definition.
Another approach would be to define the cubical model in $\prtt$ itself.

\balance 
\printbibliography
\end{document}